\def\rr{{\mathbb R}}
\def\ls{\lesssim}
\def\gs{\gtrsim}
\def\XXint#1#2#3{{\setbox0=\hbox{$#1{#2#3}{\int}$ }
\vcenter{\hbox{$#2#3$ }}\kern-.6\wd0}}
\def\({\left(}
\def \){ \right)}
\newtheorem{theorem}{Theorem}[section]
\newtheorem{lemma}[theorem]{Lemma}
\newtheorem{proposition}[theorem]{Proposition}
\theoremstyle{definition}
\newtheorem{remark}[theorem]{Remark}
\renewcommand{\appendix}{\par
   \setcounter{section}{0}%
   \setcounter{subsection}{0}%
   \setcounter{subsubsection}{0}%
   \gdef\thesection{\@Alph\c@section}%
   \gdef\thesubsection{\@Alph\c@section.\@arabic\c@subsection}%
   \gdef\theHsection{\@Alph\c@section.}%
   \gdef\theHsubsection{\@Alph\c@section.\@arabic\c@subsection}%
   \csname appendixmore\endcsname
 }
\numberwithin{equation}{section}
\begin{document}

\arraycolsep=1pt

\title{\bf\Large $L^p$ Boundedness of Hilbert Transforms Associated with Variable Plane Curves
\footnotetext{\hspace{-0.35cm} 2010 {\it
Mathematics Subject Classification}. Primary 42B20;
Secondary 42B25.
\endgraf {\it Key words and phrases.} Hilbert transform, Carleson operator,
Littlewood-Paley operator, shifted maximal operator, variable plane curve.
\endgraf This work was partially supported by
 NSFC-DFG  (Grant Nos.
11761131002).}}
\author{Haixia Yu and Junfeng Li\footnote{Corresponding author.}}
\date{}
\maketitle

\vspace{-0.7cm}

\begin{center}
\begin{minipage}{13cm}
{\small {\bf Abstract}\quad Let $p\in (1,\infty)$. In this paper, for any given measurable function
$u:\ \mathbb{R}\rightarrow \mathbb{R}$ and a generalized plane curve $\gamma$ satisfying some conditions,
the $L^p(\mathbb{R}^2)$ boundedness of the Hilbert transform along the variable plane curve $u(x_1)\gamma$
$$H_{u,\gamma}f(x_1,x_2):=\mathrm{p.\,v.}\int_{-\infty}^{\infty}f(x_1-t,x_2-u(x_1)\gamma(t))
\,\frac{\textrm{d}t}{t}, \quad \forall\, (x_1,x_2)\in\mathbb{R}^2, $$ is obtained.
At the same time, the $L^p(\mathbb{R})$ boundedness of the corresponding
Carleson operator along the general curve $\gamma$
$$\mathcal{C}_{u,\gamma}f(x):=\mathrm{p.\,v.}\int_{-\infty}^{\infty}e^{iu(x)\gamma (t)}f(x-t)\,\frac{\textrm{d}t}{t},
\quad\forall\, x\in\mathbb{R}, $$
is also obtained. Moreover, all the bounds are independent of the measurable function $u$.
}
\end{minipage}
\end{center}


\section{Introduction}

Let $u:\ \mathbb{R}\rightarrow \mathbb{R}$ be a measurable function and $\gamma$ be a generalized plane curve, the \emph{Hilbert transform $H_{u,\gamma}$} along the variable plane curve $u(x_1)\gamma$ is defined by setting, for any function $f$ in the Schwartz class $\mathcal{S}(\mathbb{R}^2)$ and $(x_1,x_2)\in\mathbb{R}^2$,
\begin{equation}\label{Hilbert transform}H_{u,\gamma}f(x_1,x_2):=\mathrm{p.\,v.}\int_{-\infty}^{\infty}f(x_1-t,x_2-u(x_1)\gamma(t))\,\frac{\textrm{d}t}{t}.
\end{equation}
Here and hereafter, $\mathrm{p.\,v.}$ denotes the principal-value integral. The corresponding \emph{Carleson operator $\mathcal{C}_{u,\gamma}$} along the general curve $\gamma$ is defined by setting, for any $f\in\mathcal{S}(\mathbb{R})$ and $x\in\mathbb{R}$,
\begin{equation}\label{Carleson operator}\mathcal{C}_{u,\gamma}f(x):= \mathrm{p.\,v.}\int_{-\infty}^{\infty}e^{iu(x)\gamma (t)}  f(x-t)\,\frac{\textrm{d}t}{t}.\end{equation}

Let $p\in (1,\infty)$. In this paper we pursue the $L^p$ boundedness
of \eqref{Hilbert transform} and \eqref{Carleson operator} for some general
plane curves $\gamma$. We first state our main results and make some remarks
and then give the motivations. For the Hilbert transform \eqref{Hilbert transform},
we have the following result.

\begin{theorem}\label{theorem 1.1}
Let $u:\ \mathbb{R}\rightarrow \mathbb{R}$ be a measurable function, $\gamma\in C^{3}(\mathbb{R})$ be either odd or even, with $\gamma(0)=\gamma'(0)=0$, and convex on $(0,\infty)$, satisfying
\begin{enumerate}
  \item[\rm(i)] $\frac{\gamma'(2t)}{\gamma'(t)}$ is decreasing and bounded by a constant $C_1$ from above on $(0,\infty)$,
  \item[\rm(ii)] there exists positive constant $C_2$ such that $\frac{t\gamma''(t)}{\gamma'(t)}\leq C_2$ on $(0,\infty)$,
  \item[\rm(iii)] there exists a positive constant $C_3$ such that $|(\frac{\gamma''}{\gamma'})'(t)|\geq \frac{C_3}{t^2}$ on $(0,\infty)$,
  \item[\rm(iv)] $\frac{\gamma'''(t)}{\gamma''(t)}$ is strictly monotone or equals to a constant on $(0,\infty)$.
\end{enumerate}
Then, for any given $p\in(1,\infty)$, there exists a positive constant $C$ such that, for any $f\in L^{p}(\mathbb{R}^{2})$,
$$\|H_{u,\gamma}f\|_{L^{p}(\mathbb{R}^{2})}\leq C \|f\|_{L^{p}(\mathbb{R}^{2})}$$
and, moreover, the bound $C$ is independent of $u$.
\end{theorem}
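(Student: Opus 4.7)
The plan is to reduce the two-dimensional operator $H_{u,\gamma}$ to a one-dimensional Carleson-type problem by taking the partial Fourier transform in the second variable. Writing $\widetilde{f}(x_1,\xi_2)$ for this partial transform, one obtains
\[
\widetilde{H_{u,\gamma}f}(x_1,\xi_2)=\mathrm{p.\,v.}\int_{\mathbb{R}}\widetilde{f}(x_1-t,\xi_2)\,e^{-iu(x_1)\gamma(t)\xi_2}\,\frac{dt}{t}.
\]
After this partial Fourier transform, $H_{u,\gamma}$ is a Carleson-type operator in $x_1$ with phase $u(x_1)\gamma(t)\xi_2$ that is linear in $\xi_2$. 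Plancherel in $x_2$ reduces the $L^2$ bound to a vector-valued $L^2$ estimate in $x_1$, and for general $p$ one passes to a Littlewood-Paley decomposition in $x_2$.

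I would next carry out a double dyadic decomposition. Choose a smooth bump $\rho$ supported on $\{|t|\sim 1\}$ forming a partition of unity, and write $H_{u,\gamma}=\sum_{j\in\mathbb{Z}}H_j$ with $H_j$ localized to $|t|\sim 2^j$. Couple this with a Littlewood-Paley decomposition $\{\Delta_k^{(2)}\}_{k\in\mathbb{Z}}$ in $x_2$ selecting $|\xi_2|\sim 2^k$. Conditions (i) and (ii) give a controlled ``phase size'' $\Lambda_{j,k}(x_1):=|u(x_1)|\gamma'(2^j)\,2^{j+k}$ on each piece $H_j\Delta_k^{(2)}$, and the whole analysis splits according to whether $\Lambda_{j,k}\gg 1$, $\Lambda_{j,k}\ll 1$, or $\Lambda_{j,k}\sim 1$.

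In the high-oscillation range $\Lambda_{j,k}\gg 1$, conditions (iii) and (iv) ensure that $\gamma''/\gamma'$ is strictly monotone with a quantitative lower bound on its derivative and that higher derivatives of $\gamma$ are well-behaved; van der Corput's lemma then yields pointwise decay of the oscillatory kernel polynomial in $\Lambda_{j,k}$. In the low-oscillation range $\Lambda_{j,k}\ll 1$, Taylor expansion of the exponential together with the cancellation $\int\rho(2^{-j}t)\,dt/t=0$ shows that $H_j\Delta_k^{(2)}$ differs from a truncated Hilbert transform by an error of size $O(\Lambda_{j,k})$; the truncated pieces assemble into the classical Hilbert transform in $t$, which is $L^p$-bounded. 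Summing these two regimes in $j$ and $k$ requires vector-valued Littlewood-Paley theory in $x_2$, combined with shifted maximal operator bounds in $x_1$ to absorb the shift $x_1\mapsto x_1-t$ and in $x_2$ to absorb the shift $x_2\mapsto x_2-u(x_1)\gamma(t)$; these produce only logarithmic losses, summable against the geometric decay in $\Lambda_{j,k}$.

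The resonant regime $\Lambda_{j,k}\sim 1$ is the main obstacle. For each fixed $x_1$ only a bounded number of pairs $(j,k)$ contribute, but which pairs contribute depends on the measurable function $u(x_1)$ and cannot be controlled by any classical stopping-time or frequency-cutoff argument. This is precisely the linearised structure of a Carleson operator: the plan is to reduce this resonant range to the one-dimensional Carleson-type operator $\mathcal{C}_{u,\gamma}$ from \eqref{Carleson operator}, applied as a black box to each frequency block $\Delta_k^{(2)}f$, thereby supplying the required $u$-independent $L^p$ bound. Combining the three regimes and summing the geometric series away from the resonant scale produces the claimed estimate, with constants depending on $C_1,C_2,C_3$ but not on $u$.
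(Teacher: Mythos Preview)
Your decomposition---Littlewood--Paley in $x_2$, dyadic in $t$, and the three-way split according to the phase size $\Lambda_{j,k}$---is exactly the skeleton the paper uses. The low-oscillation analysis you sketch (Taylor expansion plus truncated Hilbert transform, then vector-valued maximal functions) is correct and matches the paper's treatment of the part $j<n_l(x_1)$.

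There is, however, a genuine gap in the high-oscillation regime. Van der Corput applied to the phase $t\mapsto u(x_1)\gamma(t)\xi_2$ does not yield a ``pointwise kernel decay'' or an $L^p$ operator bound: because the phase depends on $x_1$ through $u(x_1)$, there is no Fourier-multiplier structure to exploit directly. What van der Corput does give---only after a $TT^*$ argument, which is the content of Proposition~\ref{proposition 3.1}---is exponential decay $2^{-\omega_0 m}$ for the $L^2\to L^2$ norm of the piece with $\Lambda_{j,k}\sim 2^m$. Separately, the shifted maximal operators produce an $L^p\to L^p$ bound, but with \emph{polynomial growth} $m^2$ (since the shift in $x_2$ has size $\sim (2C_1)^m$), not a logarithmic loss one can simply absorb. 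The geometric decay and the polynomial growth live in different Lebesgue norms; one must \emph{interpolate} between the $L^2$ decay and the $L^p$ growth to obtain a summable $L^p$ bound. That interpolation step is absent from your outline, and without it the sum over $m\ge 0$ does not close for any $p\neq 2$.

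Relatedly, the role of $\mathcal{C}_{u,\gamma}$ is misplaced. The resonant regime $\Lambda_{j,k}\sim 1$ consists of only boundedly many $j$'s for each $(k,x_1)$, and each such piece is dominated by $M_1M_2P_kf$; vector-valued Fefferman--Stein handles it with no Carleson-type input. Where the Carleson operator genuinely enters is precisely the $L^2$ side of the interpolation above: the $TT^*$ estimate giving $2^{-\omega_0 m}$ is exactly the engine behind Theorem~\ref{theorem 1.2}, applied after replacing $u$ by $2^k u$. In the paper this is \eqref{eq:3.46}, and it is this decay---not a black-box application of $\mathcal{C}_{u,\gamma}$ to the resonant block---that makes the high-oscillation sum converge.
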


For the Carleson operator \eqref{Carleson operator}, we also have the following boundedness.

\begin{theorem}\label{theorem 1.2}
Let $u$ and $\gamma$ be the same as in Theorem \ref{theorem 1.1}, and $p\in (1,\infty)$. Then there exists a positive constant $C$ such that, for any $f\in L^{p}(\mathbb{R})$,
$$\|\mathcal{C}_{u,\gamma}f\|_{L^{p}(\mathbb{R})}\leq C \|f\|_{L^{p}(\mathbb{R})}$$
and, moreover, the bound $C$ is independent of $u$.
\end{theorem}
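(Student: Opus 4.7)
The plan is to derive Theorem~\ref{theorem 1.2} from Theorem~\ref{theorem 1.1} by a transference argument that converts the two-dimensional Hilbert transform into the one-dimensional Carleson operator. Given $f\in\mathcal{S}(\mathbb{R})$, fix a real-valued $\phi\in C_{c}^{\infty}(\mathbb{R})$ with $\phi(0)=1$, and for $\lambda>1$ introduce the modulated tensor product
\[
F_{\lambda}(x_{1},x_{2}):=f(x_{1})\,\phi(x_{2}/\lambda)\,e^{-ix_{2}}\in L^{p}(\mathbb{R}^{2}),\qquad \|F_{\lambda}\|_{L^{p}(\mathbb{R}^{2})}=\lambda^{1/p}\|f\|_{L^{p}(\mathbb{R})}\|\phi\|_{L^{p}(\mathbb{R})}.
\]
Substituting into \eqref{Hilbert transform} and pulling $e^{-ix_{2}}$ outside the integral turns the translation in the second variable into precisely the Carleson phase:
\[
H_{u,\gamma}F_{\lambda}(x_{1},x_{2})=e^{-ix_{2}}\,\mathrm{p.\,v.}\!\int_{-\infty}^{\infty}\!f(x_{1}-t)\,\phi\!\left(\frac{x_{2}-u(x_{1})\gamma(t)}{\lambda}\right)e^{iu(x_{1})\gamma(t)}\,\frac{\mathrm{d}t}{t}.
\]
This identity is the mechanism that bridges the two theorems.

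After the rescaling $x_{2}=\lambda y$, the only $\lambda$-dependence inside the p.v.\ integral lives in the factor $\phi(y-u(x_{1})\gamma(t)/\lambda)$, which tends pointwise in $t$ to $\phi(y)$ for every fixed $(x_{1},y)$ with $|u(x_{1})|<\infty$. For $f\in\mathcal{S}(\mathbb{R})$, I expect to pass to the limit inside the integral by dominated convergence: near $t=0$, the principal-value cancellation against $f(x_{1})$ absorbs the $1/t$ singularity uniformly in $\lambda$, while for $|t|$ large the Schwartz decay of $f$ together with $\|\phi\|_{\infty}$ provides an integrable majorant. This gives the pointwise convergence
\[
\lim_{\lambda\to\infty}\bigl|H_{u,\gamma}F_{\lambda}(x_{1},\lambda y)\bigr|=|\phi(y)|\,|\mathcal{C}_{u,\gamma}f(x_{1})|\quad\text{for almost every }(x_{1},y)\in\mathbb{R}^{2}.
\]

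Applying Theorem~\ref{theorem 1.1} to $F_{\lambda}$ and making the change of variable $x_{2}=\lambda y$ yields, uniformly in $\lambda>1$,
\[
\int_{\mathbb{R}^{2}}\!\bigl|H_{u,\gamma}F_{\lambda}(x_{1},\lambda y)\bigr|^{p}\,\mathrm{d}x_{1}\,\mathrm{d}y=\lambda^{-1}\|H_{u,\gamma}F_{\lambda}\|_{L^{p}(\mathbb{R}^{2})}^{p}\leq C^{p}\|f\|_{L^{p}(\mathbb{R})}^{p}\|\phi\|_{L^{p}(\mathbb{R})}^{p},
\]
with $C$ independent of $u$. Fatou's lemma combined with the pointwise limit above then gives
\[
\|\phi\|_{L^{p}(\mathbb{R})}^{p}\,\|\mathcal{C}_{u,\gamma}f\|_{L^{p}(\mathbb{R})}^{p}\leq C^{p}\|f\|_{L^{p}(\mathbb{R})}^{p}\|\phi\|_{L^{p}(\mathbb{R})}^{p},
\]
and cancelling $\|\phi\|_{L^{p}(\mathbb{R})}^{p}$ proves the desired bound for $f\in\mathcal{S}(\mathbb{R})$; density of $\mathcal{S}(\mathbb{R})$ in $L^{p}(\mathbb{R})$ extends the estimate to the whole space, and $u$-independence is inherited from Theorem~\ref{theorem 1.1}. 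The main obstacle will be the rigorous justification of the pointwise limit inside the principal-value integral: one must simultaneously control $\phi(y-u(x_{1})\gamma(t)/\lambda)-\phi(y)$ uniformly in $\lambda$ near $t=0$ (exploiting p.v.\ cancellation and the $C^{\infty}$ regularity of $\phi$) and near $|t|\to\infty$ (exploiting decay of $f$ and compact support of $\phi$), while keeping the bound independent of the measurable function $u$.
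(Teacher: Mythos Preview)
Your transference argument is mathematically sound in isolation, but in the context of this paper it is circular. The paper does not establish Theorem~\ref{theorem 1.1} independently of Theorem~\ref{theorem 1.2}: the $L^{2}(\mathbb{R}^{2})$ bound for $H_{u,\gamma}$ is obtained \emph{from} Theorem~\ref{theorem 1.2} via the Phong--Stein reduction $\|H_{u,\gamma}\|_{L^{2}\to L^{2}}\le \sup_{\lambda}\|S_{\lambda}\|_{L^{2}\to L^{2}}$, and the crucial decay estimate $\|S_{k}f\|_{L^{2}}\lesssim 2^{-\omega_{0}k}\|f\|_{L^{2}}$ (proved inside the proof of Theorem~\ref{theorem 1.2}) is invoked again in Section~\ref{section 4} when interpolating the square-function bounds. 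So Theorem~\ref{theorem 1.1} is downstream of Theorem~\ref{theorem 1.2}, and you cannot quote it as a black box here.

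The paper's own proof of Theorem~\ref{theorem 1.2} is direct and self-contained: one splits $\mathcal{C}_{u,\gamma}=\mathcal{C}_{u,\gamma}^{(1)}+\mathcal{C}_{u,\gamma}^{(2)}$ according to whether the scale $2^{k}$ lies below or above the critical level $2^{n(x)}$ determined by $\gamma(2^{n(x)})\approx |u(x)|^{-1}$. The low-frequency piece $\mathcal{C}_{u,\gamma}^{(1)}$ is dominated pointwise by $Mf+\mathcal{H}^{*}f$. The high-frequency piece is written as $\sum_{k\ge 0}S_{k}$; each $S_{k}$ is trivially bounded on $L^{p}$ by $Mf$, and the summation is achieved by proving the $L^{2}$ decay $\|S_{k}\|_{2\to 2}\lesssim 2^{-\omega_{0}k}$ via a $TT^{*}$ argument together with the oscillatory-integral estimate of Proposition~\ref{proposition 3.1} (which is where Lemmas~\ref{lemma 2.1}--\ref{lemma 2.3} and hypotheses (i)--(iv) on $\gamma$ enter). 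Interpolation then gives geometric decay in $L^{p}$. Your transference idea, had Theorem~\ref{theorem 1.1} been available independently, would give a shorter derivation and is a nice observation; but within the logical structure of this paper the direct route is unavoidable.
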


Throughout this paper, we use $C$ to denote a \emph{positive
constant} that is independent of the main parameters involved, but whose
value may vary from line to line. The \emph{positive constants with subscripts},
such as $C_1$ and $C_2$, are the same in different
occurrences. For two real functions $f$ and $g$, we use $f\ls g$ or $g\gs f$ to denote $f\le Cg$ and,
if $f\ls g\ls f$, we then write $f\approx g$.

\begin{remark}\label{remark 1.1}
Since $\gamma\in C^{3}(\mathbb{R})$, $\gamma(0)=\gamma'(0)=0$ and $\gamma$ is convex on $(0,\infty)$, it implies that
$\gamma''\geq 0$ on $(0,\infty)$,
and $\gamma'$ is increasing and $\gamma'\geq0$ on $(0,\infty)$. Thus, $\gamma\geq0$ on $(0,\infty)$. We also know that $\gamma'(t)\geq \gamma'(1)$ for any $t\in[1,\infty)$, which further follows that
$\lim_{t\rightarrow \infty}\gamma(t)=\infty.$
Since $\gamma'$ is increasing on $(0,\infty)$ and $\gamma(0)=0$, it is easy to check that $1\leq\frac{t\gamma'(t)}{\gamma(t)}$ for any $t\in (0,\infty)$. On the other hand, since $\gamma(0)=0$, by the Cauchy mean value theorem, for any given $t\in(0,\infty)$ there exists $\xi_t\in (0,t)$ such that
$$\frac{t\gamma'(t)}{\gamma(t)}=\frac{t\gamma'(t)-0\gamma'(0)}{\gamma(t)-\gamma(0)}=\frac{\gamma'(\xi_t)+\xi_t\gamma''(\xi_t)}{\gamma'(\xi_t)}.$$
Thus, by Theorem \ref{theorem 1.1}(ii), there exists $C_4:=C_2+1$ such that
$$1\leq \frac{t\gamma'(t)}{\gamma(t)}\leq C_4, \quad \forall\, t\in (0,\infty).$$
\end{remark}

\begin{remark}\label{remark 1.2}
Since $\gamma'$ is increasing on $(0,\infty)$, with Theorem \ref{theorem 1.1}(i), we always have
$$1\leq \frac{\gamma'(2t)}{\gamma'(t)}\leq C_ 1, \quad \forall\, t\in (0,\infty).$$
\end{remark}

\begin{remark}\label{remark 1.3}
Follows are some curves $\gamma$ satisfying all the conditions of Theorem \ref{theorem 1.1}. We here only write the part for any $t\in [0,\infty)$. For any $t\in (-\infty,0]$, it is given by its even or odd property. For example,
\begin{enumerate}
  \item[\rm(i)] for any $t\in [0,\infty)$, $\gamma(t):=t^\alpha, \alpha\in(1,\infty)$,
  \item[\rm(ii)] for any $t\in [0,\infty)$, $\gamma(t):=t^2\log(1+t)$,
  \item[\rm(iii)] for any $t\in [0,\infty)$, $\gamma(t):=\int_0^t \tau^\alpha\log(1+\tau)\, \textrm{d}\tau$, $\alpha\in(1,\infty)$.
\end{enumerate}
\end{remark}

\begin{remark}\label{remark 1.0} Theorem \ref{theorem 1.1}(iv) is introduced to guarantee Lemma \ref{lemma 2.3}, which will be used to establish the crucial estimate of the oscillatory integral in Proposition \ref{proposition 3.1}. From the proof of Lemma \ref{lemma 2.3}, Theorem \ref{theorem 1.1}(iv) can be replaced by a wider condition:
For any $c\neq 0$, the equation $\frac{\gamma'''(t-c)}{\gamma''(t-c)}=\frac{\gamma'''(t)}{\gamma''(t)}$ on $t\in\mathbb{R}$ and $t\neq0$, $t\neq c$,
has a finite number of solutions including there is no solution, or there is at most a finite number of intervals such that the equation above is established on each of the intervals, or both, where the number is independent of $c$.
\end{remark}

\begin{remark}\label{remark 1.4}
In (\cite{G2}, Theorem 1.2), Guo et al. obtained the $L^p(\mathbb{R}^2)$ boundedness of $H_{u,\gamma}$ with the curve as in Remark \ref{remark 1.3}(i) for any given $p\in(1,\infty)$, but with $\alpha\in(0,\infty)$ and $\alpha\neq 1$. Thus, as a special case, Theorem \ref{theorem 1.1} covers (\cite{G2}, Theorem 1.2) whenever $\alpha\in(1,\infty)$. The work \cite{G2} illuminate us a lot in the proofs but we still make several contributions in the argument.  For the homogeneous curve, it is easy to see that $\gamma(ab)=\gamma(a)\gamma(b)$ for any $a,b\in(0,\infty)$. Since we seek for the $L^p(\mathbb{R}^n)$ boundedness of $H_{u,\gamma}$ with the bound independent of $u$, it is nature to absorb $u(x)$ by $\gamma$ for any fixed $x$. This can be easily obtained with $\gamma(t):=t^\alpha$ since $$|u(x)|\gamma(t)=\gamma(|u(x)|^{\frac{1}{\alpha}}t).$$ This property of course can not be hold by a general curve $\gamma$ and this property is crucial to make further decomposition. Motivated by \cite{G8} we introduce the map $n:\mathbb{R}\rightarrow\mathbb{Z}$ for any $x\in\mathbb{R}$ such that
$$\frac{1}{\gamma(2^{n(x)+1})}\leq|u(x)|\leq\frac{1}{\gamma(2^{n(x)})}.$$
This formula first appears in \eqref{eq:3.1}.

Another difficulty appearing in the $L^2(\mathbb{R}^2)$ estimations of the Hilbert transform $H_{u,\gamma}$ and the $L^p(\mathbb{R})$ boundedness of the Carleson operator $\mathcal{C}_{u,\gamma}$ for any given $p\in(1,\infty)$. It is crucial to establish a decay estimate of an oscillatory integral as in Propostion \ref{proposition 3.1}. If we have a homogeneous curve as in Remark \ref{remark 1.3}(i), it is easy to calculate the derivatives of the phase functions and the decay estimation would be easier to obtain. But for general curve $\gamma$ we need more complicated analysis and the assumptions (i),(ii),(iii) and (iv) of Theorem \ref{theorem 1.1} on the curves appear naturally during the estimation.

The main difficulty appearing in the $L^p(\mathbb{R}^2)$ estimations of the Hilbert transform $H_{u,\gamma}$ for any given $p\in(1,\infty)$ is as follows. By the Littlewood-Paley theory and notice the commutation relation $H_{u,\gamma}P_l=P_lH_{u,\gamma}$ for any $l\in \mathbb{Z}$, we need to establish a refined estimate for $H_{u,\gamma,k+n_l(x_1)}P_l$ by the shifted maximal operator. Here $P_l$ denotes the Littlewood-Paley decompostion operator according to the second variable and $l\in \mathbb{Z}$. Guo et al. in \cite{G2} considered the homogeneous case as in Remark \ref{remark 1.3}(i) did not need $n_l(x_1)$, where the map $n_l:\ \mathbb{R}\rightarrow \mathbb{Z}$ for any $x_1\in\mathbb{R}$ and $l\in\mathbb{Z}$ defined by
$$\frac{1}{\gamma(2^{n_l(x_1)+1})}\leq 2^l|u(x_1)|\leq\frac{1}{\gamma(2^{n_l(x_1)})},$$
see \eqref{eq:y3}. This new note allows us to obtain the refined estimate for $H_{u,\gamma,k+n_l(x_1)}P_l$ with a great effort to control the dyadic pieces by the shifted maximal operator. This is the main difficulty we overcomed and appearing in the estimations of \eqref{eq:3.62}.
\end{remark}

First, if $u:\ \mathbb{R}\rightarrow \mathbb{R}$ is a real number $\lambda$, then the operator in \eqref{Hilbert transform} is equivalent to the following \emph{directional Hilbert transform $H_{\lambda,\gamma}$} along a general curve $\gamma$ defined for a fixed direction $(1,\lambda)$ as
$$H_{\lambda,\gamma}f(x_1,x_2):=\mathrm{p.\,v.}\int_{-\infty}^{\infty}f(x_1-t,x_2-\lambda \gamma(t))\,\frac{\textrm{d}t}{t},\quad\forall\, (x_1,x_2)\in\mathbb{R}^2,$$
whose $L^p(\mathbb{R}^2)$ boundedness can be obtained obviously by the \emph{Hilbert transform $H_{\gamma}$} along a general curve $\gamma$:
\begin{equation}\label{H2}
H_{\gamma}f(x_1,x_2):=\mathrm{p.\,v.}\int_{-\infty}^{\infty}f(x_1-t,x_2-\gamma(t))
\,\frac{\textrm{d}t}{t},\quad\forall\, (x_1,x_2)\in\mathbb{R}^2.\end{equation}
This operator has independent interests, which is one of the motivations of this paper. There are enumerate literatures on this problem; see, for example, \cite{CCCD,CCVWW,CVWW,NVWW,VWW,W}. On the other hand, letting $p\in (1,\infty)$, it is not hard to obtained that
$$\sup_{\lambda\in \mathbb{R}}\left\|    H_{\lambda,\gamma}f  \right\|_{L^{p}(\mathbb{R}^{2})}\leq C \|f\|_{L^{p}(\mathbb{R}^{2})}.$$
But the $L^p(\mathbb{R}^2)$ boundedness of the corresponding maximal operator $\sup_{\lambda\in \mathbb{R}}|H_{\lambda,\gamma}f(x_1,x_2)|$ might not be obtained so obviously. In fact, by linearization, this uniformity estimate is tantamount to the $L^p(\mathbb{R}^2)$ estimate for
$$H_{U,\gamma}f(x_1,x_2):=\mathrm{p.\,v.}\int_{-\infty}^{\infty}f(x_1-t,x_2-U(x_1,x_2) \gamma(t))\,\frac{\textrm{d}t}{t},\quad\forall\, (x_1,x_2)\in\mathbb{R}^2,$$
and the bound must be independent of the measurable function $U$. But, it is well known that $H_{U,\gamma}$ might not lie in any $L^p(\mathbb{R}^2)$ if we only assume $U$ is a measurable function, see \cite{G2}. Therefore, we cannot hope to get that
$$\left\|  \sup_{\lambda\in \mathbb{R}} | H_{\lambda,\gamma}f | \right\|_{L^{p}(\mathbb{R}^{2})}\leq C \left\|f\right\|_{L^{p}(\mathbb{R}^{2})}$$
for any given $p\in (1,\infty)$. Instead of this, Theorem \ref{theorem 1.1} shows that
$$\left\| \sup_{\lambda\in \mathbb{R}}\left\| H_{\lambda,\gamma}f(\cdot_1,\cdot_2) \right \|_{L^{p}(\mathbb{R}^1_{x_2})}\right \|_{L^{p}(\mathbb{R}^1_{x_1})}\leq C \left\|f\right\|_{L^{p}(\mathbb{R}^{2})}$$
for all $p\in (1,\infty)$, which squeezes the supremum between the two $L^p$ norms on the left hand side. Here and hereafter, $\cdot_1$ and $\cdot_2$ denote the first variable $x_1$ and the second variable $x_2$, respectively. As Stein and Wainger pointed out in \cite{SW1} that the curvature of the considered curve plays a crucial role in this project, the conditions $(\textrm{i})$, $(\textrm{ii})$, $(\textrm{iii})$ and $(\textrm{iv})$ of Theorems \ref{theorem 1.1} are used to describe the curvature of the considered curve $\gamma$.

Second, if $\gamma(t):=t$ for any $t\in \mathbb{R}$, Bateman in \cite{B1} proved that $H_{u,\gamma}P_k$ is bounded on $L^p(\mathbb{R}^2)$ for any given $p\in(1,\infty)$ uniformly for any $k\in\mathbb Z$, where $P_k$ denotes the Littlewood-Paley projection operator in the second variable. Later, Bateman and Thiele in \cite{B2} proved the $L^p(\mathbb{R}^2)$ boundedness of $H_{u,\gamma}$ for all $p\in(\frac{3}{2},\infty)$. Moreover, let $\gamma$ be $|t|^\alpha$ or $\textrm{sgn}(t)|t|^\alpha$ for any $t\in \mathbb{R}$, $\alpha\in(0,\infty)$, $\alpha\neq1$, Guo et al. in \cite{G2} obtained the $L^p(\mathbb{R}^2)$ boundedness of $H_{u,\gamma}$ for any given $p\in(1,\infty)$.
Furthermore, Carbery et al. in \cite{CWW} obtained the $L^p(\mathbb{R}^2)$ boundedness of $H_{u,\gamma}$ for any given $p\in(1,\infty)$, but with the restriction that $u(x_1):=x_1$ for any $x_1\in \mathbb{R}$, where $\gamma\in C^{3}(\mathbb{R})$ is either odd or even, convex curve on $(0,\infty)$, and satisfies $\gamma(0)=\gamma'(0)=0$ and the quantity $\frac{t\gamma''(t)}{\gamma'(t)}$ is decreasing and bounded below on $(0,\infty)$. Under the same condition, Bennett in \cite{BJ} obtained the $L^2(\mathbb{R}^2)$ boundedness of
\begin{equation}\label{HP}H_{P,\gamma}f(x_1,x_2):=\mathrm{p.\,v.}\int_{-\infty}^{\infty}f(x_1-t,x_2-P(x_1)\gamma(t))\,\frac{\textrm{d}t}{t}, \quad\forall\, (x_1,x_2)\in\mathbb{R}^2, \end{equation}
for any general polynomial $P$. More recently, Chen and Zhu in \cite{CZx} obtained the $L^2(\mathbb{R}^2)$ boundedness of $H_{P,\gamma}$ in \eqref{HP} by asking the curvature condition as
\begin{center}
$(\frac{\gamma''}{\gamma'})'(t)\leq -\frac{\lambda_1}{t^2}$ for any $t\in[0,\infty)$ and some positive constant $\lambda_1$.
\end{center}
In \cite{YL}, we also obtained the $L^2(\mathbb{R}^2)$ boundedness of $H_{P,\gamma}$ in \eqref{HP} if the curvature condition for $\gamma\in C^{2}(\mathbb{R})$ is replaced by
\begin{enumerate}
  \item[\rm(i)] $\frac{\gamma''(t)}{\gamma'(t)}$ is decreasing on $(0,\infty)$,
  \item[\rm(ii)] there exists a positive constant $\lambda_2$ such that $\frac{t\gamma''(t)}{\gamma'(t)}\geq \lambda_2$ for any $t\in (0,\infty)$,
  \item[\rm(iii)] $\gamma''(t)$ is monotone on $(0,\infty)$.
\end{enumerate}
All of these results are based on iteration on the degree of polynomial $P$ and hence can not extend to general measurable function $u$. Thus, Theorem \ref{theorem 1.1} is the first result on the generalized plane curve $\gamma$.

The Carleson operator \eqref{Carleson operator} along a plane curve appears naturally in the study of the $L^2(\mathbb{R}^2)$ boundedness of the Hilbert transform \eqref{Hilbert transform}. This fact will be stated in Section 3. This operator itself is also interesting.
The original \emph{Carleson operator $\mathcal{C}$} is defined by setting, for any
$f\in\mathcal{S}(\mathbb{R})$ and $x\in\mathbb{R}$,
$$\mathcal{C}f(x):=\sup_{N\in \mathbb{R}} \left| \mathrm{p.\,v.}\int_{-\infty}^{\infty}e^{iNy}  f(x-y)\,\frac{\textrm{d}y}{y}\right|.$$
By linearization, the estimate
$$\|\mathcal{C}f\|_{L^p(\mathbb{R})}\leq C \|f\|_{L^p(\mathbb{R})}$$
is equivalent to
$$\|\mathcal{C}_{u}f\|_{L^p(\mathbb{R})}\leq C \|f\|_{L^p(\mathbb{R})},$$
where $u:\ \rr\to\rr$ is a measurable function,
$$\mathcal{C}_{u}f(x):=\mathrm{p.\,v.}\int_{-\infty}^{\infty}e^{iu(x)t}  f(x-t)\,\frac{\textrm{d}t}{t}, \quad\forall\, x\in\mathbb{R},$$
and the bound $C$ is a positive constant independent of $u$. In \cite{C}, Carleson obtained the $L^2(\mathbb{R})$ boundedness of $\mathcal{C}$, which plays an important role in obtaining almost everywhere convergence of Fourier series of $L^2(\mathbb{R})$ functions and also confirmed the famous Luzin conjecture. Hunt later obtained its $L^p(\mathbb{R})$ boundedness for any given $p\in(1,\infty)$ in \cite{H}. For further results about $\mathcal{C}$, we refer the reader to \cite{F,LT,PS1}. Stein and Wainger in \cite{SW} extended $\mathcal{C}_{u}$ to the \emph{Carleson operator $\mathcal{C}_{u,d}$} along a homogeneous curve $t^d$ with integer $d>1$, namely, for any
$f\in\mathcal{S}(\mathbb{R})$ and $x\in\mathbb{R}$,
$$\mathcal{C}_{u,d}f(x):=  \mathrm{p.\,v.}\int_{-\infty}^{\infty}e^{iu(x)t^d}  f(x-t)\,\frac{\textrm{d}t}{t}.$$
Stein and Wainger showed that its $L^p(\mathbb{R})$ bound is independent of $u$, where $p\in(1,\infty)$. Guo in \cite{G7} extended $\mathcal{C}_{u,d}$ further into the Carleson operator along a homogeneous curve $|t|^{\varepsilon_1}$
or $\textrm{sgn}(t) |t|^{\varepsilon_2}$, where $\varepsilon_1, \varepsilon_2\in\mathbb{R} $, $\varepsilon_1\neq 1$ and $\varepsilon_2\neq 0$. Thus, it is natural to consider the Carleson operator $\mathcal{C}_{u,\gamma}$ along a more general curve in \eqref{Carleson operator}. This is one of our main results to establish its $L^p(\mathbb{R})$ boundedness of Carleson operator along a general curve $\gamma $ in Theorem \ref{theorem 1.2} with the bound independent of $u$.

This paper is organized as follows. In Section \ref{section 2.1}, we provide some lemmas serve as a preparation for the corresponding proof of Theorem \ref{theorem 1.2}. In Section \ref{section 2.2} we give the proof of Theorem \ref{theorem 1.2}. It then establishes the $L^2(\mathbb{R}^2)$ boundedness of \eqref{Hilbert transform}. Section \ref{section 3} is devoted to obtaining the single annulus $L^p(\mathbb{R}^2)$ estimate for \eqref{Hilbert transform} for any given $p\in(1,\infty)$, it is Theorem \ref{theorem 1.3}. Section \ref{section 4} is devoted to obtaining the $L^p(\mathbb{R}^2)$ boundedness of \eqref{Hilbert transform} for any given $p\in(1,\infty)$ which then helps us to finish the proof of Theorem \ref{theorem 1.1}.


\section{Proof of Theorem \ref{theorem 1.2}}\label{proof of 1.2}

\subsection{Some lemmas}\label{section 2.1}

Before giving the proof of Theorem \ref{theorem 1.2}, we state three lemmas. Van der Corput's lemma is a useful tool to bound an oscillatory integral but, for the case $k=1$, a simple lower bound on $|\phi'|$ is not sufficient. We need to add a condition that $\phi'$ is monotonic such that $\int_a^b | \frac{\textrm{d}}{\textrm{d}t} (\frac{1}{\phi'(t)}) |\,\textrm{d}t$ is dominated by a constant. Lemma \ref{lemma 2.1} is a slight variant of van der Corput's lemma which replaced the additional condition by $\phi''$ is bounded from above. Lemma \ref{lemma 2.2} is used to get an interesting fact, for the phase function $\phi$ of the considered oscillatory integral, we must have $|\phi'|\geq C$ or $|\phi''|\geq C$. However, it is not sufficient to complete our estimate even if we obtained the surprising lower bound on $|\phi'|$ or $|\phi''|$, since we can take an infinite number of intervals such that the lower bound is established. Lemma \ref{lemma 2.3} is used to make sure that such case does not happen.

\begin{lemma}\label{lemma 2.1}
Suppose $\phi$ is real-valued and smooth in $(a,b)$, and that both $|\phi'(x)|\geq \sigma_1$ and $|\phi''(x)|\leq \sigma_2$ for any $x\in (a,b)$. Then
$$\left|\int_a^b e^{i\phi (t)}\,\textrm{d}t\right|\leq \frac{2}{\sigma_1}+(b-a)\frac{\sigma_2}{\sigma_1^2}.$$
\end{lemma}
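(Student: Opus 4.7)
The plan is to prove this via a single integration by parts, which is the same starting point as the classical van der Corput lemma, but then bound the resulting integral directly using the $|\phi''|\le\sigma_2$ hypothesis instead of appealing to monotonicity of $\phi'$.

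First, I would write
\begin{equation*}
\int_a^b e^{i\phi(t)}\,\textrm{d}t = \int_a^b \frac{1}{i\phi'(t)}\,\frac{\textrm{d}}{\textrm{d}t}\bigl(e^{i\phi(t)}\bigr)\,\textrm{d}t,
\end{equation*}
which is legitimate because the assumption $|\phi'(t)|\ge\sigma_1>0$ ensures that $1/\phi'$ is well-defined and smooth on $(a,b)$. Integration by parts then yields
\begin{equation*}
\int_a^b e^{i\phi(t)}\,\textrm{d}t = \left[\frac{e^{i\phi(t)}}{i\phi'(t)}\right]_a^b + \int_a^b e^{i\phi(t)}\,\frac{\phi''(t)}{i(\phi'(t))^2}\,\textrm{d}t,
\end{equation*}
where I used $\frac{\textrm{d}}{\textrm{d}t}\bigl(1/\phi'(t)\bigr)=-\phi''(t)/(\phi'(t))^2$.

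Next I would estimate each piece. The boundary contribution is bounded in absolute value by $2/\sigma_1$, using $|\phi'|\ge\sigma_1$ at both endpoints. For the remaining integral, I would bound the integrand pointwise:
\begin{equation*}
\left|e^{i\phi(t)}\,\frac{\phi''(t)}{(\phi'(t))^2}\right| = \frac{|\phi''(t)|}{(\phi'(t))^2} \le \frac{\sigma_2}{\sigma_1^2},
\end{equation*}
using both hypotheses. Integrating over $(a,b)$ gives the $(b-a)\sigma_2/\sigma_1^2$ contribution, and adding the two estimates produces the desired bound.

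There is essentially no obstacle here: the point of the lemma is precisely that the bound $|\phi''|\le\sigma_2$ provides a trivial pointwise estimate on the derivative $(1/\phi')'$, bypassing the need for the monotonicity hypothesis that appears in the classical formulation. The only mild care is to note that smoothness of $\phi$ together with $|\phi'|\ge\sigma_1$ on the open interval $(a,b)$ is enough to justify the integration by parts (and to take limits at the endpoints if $\phi$ is only assumed smooth on the open interval).
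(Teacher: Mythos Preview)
Your proof is correct and is essentially identical to the paper's: integration by parts exactly as in the classical van der Corput argument, then bounding the boundary term by $2/\sigma_1$ and the integral $\int_a^b |\phi''(t)|/|\phi'(t)|^2\,\textrm{d}t$ pointwise by $(b-a)\sigma_2/\sigma_1^2$. The paper cites Stein for the integration-by-parts step; you wrote it out explicitly.
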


\begin{proof}
From the proof of the van der Corput lemma's, see, for example, (\cite{S}, P.332, Proposition 2), which bounds the integral by
$$\left| \frac{e^{i\phi (b)}}{i\phi'(b) } -\frac{e^{i\phi (a)}}{i\phi'(a) }\right|+ \int_a^b \left| \frac{\textrm{d}}{\textrm{d}t} \left(\frac{1}{\phi'(t)}\right)  \right|\,\textrm{d}t\lesssim \frac{2}{\sigma_1}+\int_a^b \left| \frac{\phi''(t)}{\phi'(t)^2} \right|\textrm{d}t  \lesssim \frac{2}{\sigma_1}+ (b-a)\frac{\sigma_2}{\sigma_1^2},$$
it is easy to deduce the desired conclusion of Lemma \ref{lemma 2.1}.
\end{proof}

\begin{lemma}\label{lemma 2.2}
 (\cite{G8}, Lemma 4.5) Let $A$ be an invertible $n\times n$ matrix and $x\in \mathbb{R}^n$. Then
$$|Ax|\geq |\textrm{det} A| \|A\|^{1-n} |x|,$$
where $\|A\|$ denotes the matrix norm $\sup_{|x|=1}|Ax|$.
\end{lemma}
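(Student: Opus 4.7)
The plan is to reduce the desired lower bound on $|Ax|$ to an upper bound on $\|A^{-1}\|$. Indeed, since $A$ is invertible, for every $x\in\mathbb{R}^n$ we have
\[
|x|=|A^{-1}Ax|\le \|A^{-1}\|\,|Ax|,
\]
so that $|Ax|\ge \|A^{-1}\|^{-1}|x|$. It therefore suffices to show
\[
\|A^{-1}\|^{-1}\ge |\det A|\,\|A\|^{1-n}.
\]

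To establish this I would use the singular value decomposition $A=U\Sigma V^{*}$, where $U,V$ are unitary and $\Sigma=\mathrm{diag}(\sigma_{1},\dots,\sigma_{n})$ with $\sigma_{1}\ge\sigma_{2}\ge\cdots\ge\sigma_{n}>0$. Since the operator norm is unitarily invariant, $\|A\|=\sigma_{1}$ and $\|A^{-1}\|=\sigma_{n}^{-1}$, while $|\det A|=\sigma_{1}\sigma_{2}\cdots\sigma_{n}$. Then
\[
\|A^{-1}\|^{-1}=\sigma_{n}=\frac{|\det A|}{\sigma_{1}\sigma_{2}\cdots\sigma_{n-1}}\ge \frac{|\det A|}{\sigma_{1}^{\,n-1}}=|\det A|\,\|A\|^{1-n},
\]
which combined with the first display yields the claim.

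If one prefers to avoid the SVD, the same estimate follows from Cramer's rule: $A^{-1}=(\det A)^{-1}\,\mathrm{adj}(A)$, and each entry of $\mathrm{adj}(A)$ is (up to sign) the determinant of an $(n-1)\times(n-1)$ submatrix of $A$, which by Hadamard's inequality applied to its rows is at most $\|A\|^{n-1}$ (each row has Euclidean norm at most $\|A\|$). This gives $\|\mathrm{adj}(A)\|\lesssim \|A\|^{n-1}$, hence $\|A^{-1}\|\lesssim |\det A|^{-1}\|A\|^{n-1}$, which rearranges to the same bound. There is no real obstacle here; the only mild subtlety is keeping track of the correct power $\|A\|^{1-n}$ (rather than $\|A\|^{-1}$), which is precisely the cost of dividing $|\det A|$ by the $n-1$ largest singular values instead of only the largest one.
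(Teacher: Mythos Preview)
Your proof via the singular value decomposition is correct and gives the sharp constant; the paper does not supply its own proof of this lemma but simply quotes it from \cite{G8}. (Your alternative via Cramer's rule and Hadamard's inequality is also fine, though as you note it only yields the inequality up to a dimensional constant, whereas the SVD argument gives it exactly.)
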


\begin{lemma}\label{lemma 2.3}
Let $\gamma$ be the same as in Theorem \ref{theorem 1.1}, for any $a,b,c,d\in \mathbb{R}$ and $d>0$, there is at most a finite number of intervals such that
\begin{align}\label{eq:hx}
|a\gamma'(t)-b\gamma'(t-c)|>d
\end{align}
is established on each of the intervals, where $t\in \mathbb{R}$ and the number of intervals is independent of $a,b,c,d$.
\end{lemma}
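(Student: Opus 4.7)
The plan is to use a cascade of Rolle's-theorem reductions, combined with a sign/parity case split, to reduce the counting problem all the way down to the equation $\phi(t) = \phi(t-c)$ with $\phi := \gamma'''/\gamma''$, at which point Theorem \ref{theorem 1.1}(iv) (or the weaker variant in Remark \ref{remark 1.0}) directly applies. Set $f(t) := a\gamma'(t) - b\gamma'(t-c)$. The set $E := \{t\in\mathbb{R} : |f(t)|>d\}$ is open, hence a disjoint union of open intervals, and each bounded interval in the union has its two endpoints in $\{f=d\}\cup\{f=-d\}$. Therefore the number of intervals in $E$ is controlled by $\#\{f=d\}+\#\{f=-d\}$ plus at most $2$ for the unbounded intervals; by Rolle's theorem, each $\#\{f=v\}$ is bounded by one plus the number of zeros of $f'$. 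Thus it suffices to show that $f'(t) = a\gamma''(t)-b\gamma''(t-c)$ has a number of zeros bounded uniformly in $a,b,c,d$.

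If $a=b=0$ then $E=\emptyset$; if exactly one of $a,b$ vanishes, then $f'=0$ amounts to the vanishing of a single $\gamma''$-factor. Condition (iii) of Theorem \ref{theorem 1.1} forces $\gamma''/\gamma'$ to be strictly monotonic on $(0,\infty)$, so $\gamma''$ has at most one zero on $(0,\infty)$; by the parity of $\gamma$ the same holds on $(-\infty,0)$, giving a constant bound independent of $c$. When $a,b$ are both nonzero, outside the at most two points where $\gamma''(t-c)=0$ the equation $f'(t)=0$ is equivalent to $\psi(t) = b/a$ where $\psi(t):=\gamma''(t)/\gamma''(t-c)$. Applying Rolle again, it suffices to bound the zeros of $\psi'$, and a direct computation gives $\psi'(t)=0$ iff $\phi(t)=\phi(t-c)$ with $\phi := \gamma'''/\gamma''$.

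A short parity calculation (using $\gamma(0)=\gamma'(0)=0$ together with the even/odd symmetry of $\gamma$) shows that in both cases $\phi$ is an \emph{odd} function. We now split $\mathbb{R}$ into the (at most) three open intervals determined by the points $0$ and $c$. On any sub-interval where $t$ and $t-c$ have the same sign, $\phi$ is strictly monotonic there by Theorem \ref{theorem 1.1}(iv), and hence $\phi(t)=\phi(t-c)$ forces $t = t-c$, which is impossible unless $c=0$ (a case trivially reducible to the strict convexity of $\gamma$). On the remaining mixed-sign interval, oddness of $\phi$ lets us rewrite the equation as $g(t):=\phi(t)+\phi(c-t)=0$, a function whose derivative $g'(t)=\phi'(t)-\phi'(c-t)$ changes sign at most once (namely at $t=c/2$), so $g$ has at most two zeros there. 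If instead $\phi$ is constant on a half-line, or more generally satisfies only the broader condition in Remark \ref{remark 1.0}, the same Rolle reduction applies and the bounded number of intervals on which $\phi(t)=\phi(t-c)$ holds translates directly into a bounded number of intervals on which $\psi$ is locally constant, and thus boundedly many intervals on which $f' \equiv 0$.

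The main obstacle will be the mixed-sign interval: here monotonicity of $\phi$ alone does not immediately settle the count, and the oddness of $\phi$ together with a second-order symmetry argument is needed to bound the zeros of $g$. A secondary technical point is that each Rolle reduction must be applied per connected component of the domain where the relevant functions are defined and nonsingular, since we successively remove the finite sets $\{\gamma''(t)=0\}$ and $\{\gamma''(t-c)=0\}$; the cardinality of these exceptional sets is bounded (via Theorem \ref{theorem 1.1}(iii)) independently of $c$, so the cumulative loss in all the Rolle steps remains an absolute constant, yielding the desired uniform bound on the number of intervals composing $E$.
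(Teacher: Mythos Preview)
Your overall strategy coincides with the paper's: both arguments reduce, via Rolle-type steps, the count of intervals for $|a\gamma'(t)-b\gamma'(t-c)|>d$ to a bound on the solution set of
\[
\phi(t)=\phi(t-c),\qquad \phi:=\gamma'''/\gamma'',
\]
and then invoke condition~(iv) together with the parity of $\gamma$. The paper writes this reduction as $F_c(t):=\gamma''(t-c)/\gamma''(t)$ and studies $F_c'$, which is exactly your $\psi'$; so up to that point the two proofs are essentially identical.

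The gap is in your treatment of the mixed-sign interval. You claim that $g(t):=\phi(t)+\phi(c-t)$ has derivative $g'(t)=\phi'(t)-\phi'(c-t)$ which ``changes sign at most once (namely at $t=c/2$)''. This would require $\phi'$ to be injective (equivalently, $\phi$ convex or concave) on $(0,c)$, but condition~(iv) only gives that $\phi$ is strictly monotone, i.e.\ that $\phi'$ keeps a fixed sign --- it says nothing about the monotonicity of $\phi'$. For a concrete obstruction, take $\phi(t)=2t+\sin t$, which is odd and strictly increasing; then $\phi'(t)-\phi'(c-t)=\cos t-\cos(c-t)=-2\sin(c/2)\sin(t-c/2)$, which has roughly $c/\pi$ zeros in $(0,c)$, so your ``at most one sign change'' assertion fails. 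Thus the bound on the zeros of $g$ does not follow from the argument you give.

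For comparison, the paper does \emph{not} attempt a finer analysis of the mixed-sign interval: after reaching the equation $\phi(t-c)=\phi(t)$ it simply asserts that strict monotonicity (or constancy) of $\phi$ on $(0,\infty)$ together with the odd/even symmetry of $\gamma$ yields the required uniformly bounded solution count, and indeed Remark~\ref{remark 1.0} shows that this very statement about $\phi$ is what is really being used. So your extra step is not needed for the lemma as the paper frames it; if you want to retain your more explicit argument, you must either supply an additional hypothesis ensuring monotonicity of $\phi'$, or replace the ``$g'$ changes sign once'' claim by a different mechanism that genuinely bounds the zeros of $g$ on $(0,c)$ uniformly in $c$.
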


\begin{proof}
Since \eqref{eq:hx} is equivalent to
$$a\gamma'(t)-b\gamma'(t-c)-d>0$$
or
$$a\gamma'(t)-b\gamma'(t-c)+d<0.$$
Note that $\gamma\in C^{3}(\mathbb{R})$, it is enough to show that
\begin{align}\label{eq:hx1}
a\gamma''(t)-b\gamma''(t-c)=0
\end{align}
has a finite number of solutions including there is no solution, or there is at most a finite number of intervals such that \eqref{eq:hx1} is established on each of the intervals, or both, where the number is independent of $a,b,c$. There are some cases:

If $b=0$ and $a=0$. Then \eqref{eq:hx} does not exist, in other words, there is no intervals such that \eqref{eq:hx} is established.

If $b=0$ and $a\neq0$. Since $\gamma$ is either odd or even and $\gamma'$ is increasing on $(0,\infty)$, then the Lemma \ref{lemma 2.3} is obtained obviously.

If $b\neq0$, $c=0$ and $a=b$. Then \eqref{eq:hx} does not exist.

If $b\neq0$, $c=0$ and $a\neq b$. Then \eqref{eq:hx} is equivalent to $|(a-b)\gamma'(t)|>d$, as what we had stated, it is easy to see that the Lemma \ref{lemma 2.3} is established.

If $b\neq0$ and $c\neq0$. From Theorem \ref{theorem 1.1}(iv), $\gamma''(t)\neq0$ for any $t\in(0,\infty)$, note that $\gamma$ is either odd or even, then $\gamma''(t)\neq0$ for any $t\in(-\infty,0)\bigcup (0,\infty)$. It is easy to see that we should only consider $t\neq0$ and $t\neq c$ for \eqref{eq:hx1}. Then \eqref{eq:hx1} is equivalent to
\begin{align}\label{eq:hx2}
\frac{a}{b}=\frac{\gamma''(t-c)}{\gamma''(t)}, \quad t\neq0, \ t\neq c, \ t\in \mathbb{R}.
\end{align}
Let
$$F_c(t):=\frac{\gamma''(t-c)}{\gamma''(t)}, \quad t\neq0, \ t\neq c, \ t\in \mathbb{R}.$$
We see that for any $t\neq0, t\neq c, t\in \mathbb{R}$,
\begin{align}\label{eq:hx3}
F_c'(t)=\frac{ \gamma'''(t-c)\gamma''(t)-\gamma''(t-c)\gamma'''(t)  }{(\gamma''(t))^2}=\frac{\gamma''(t-c) \left[\frac{\gamma'''(t-c)}{\gamma''(t-c)}-\frac{\gamma'''(t)}{\gamma''(t)} \right] }{\gamma''(t)}.
\end{align}
From Theorem \ref{theorem 1.1}(iv), $\frac{\gamma'''(t)}{\gamma''(t)}$ is strictly monotone or equals to a constant on $(0,\infty)$, since $\gamma$ is either odd or even, then the equation
\begin{align}\label{eq:hx4}
\frac{\gamma'''(t-c)}{\gamma''(t-c)}=\frac{\gamma'''(t)}{\gamma''(t)}, \quad t\neq0, \ t\neq c, \ t\in \mathbb{R},
\end{align}
has a finite number of solutions including there is no solution, or there is at most a finite number of intervals such that \eqref{eq:hx4} is established on each of the intervals, or both, where the number is independent of $c$. Therefore, $F_c'(t)$ in \eqref{eq:hx3} has the same character as \eqref{eq:hx4}. Then \eqref{eq:hx2} also has the same character as \eqref{eq:hx4}. This finishes the proof of Lemma \ref{lemma 2.3}.
\end{proof}

\subsection{$L^p(\mathbb{R})$ estimate for the Carleson operator $\mathcal{C}_{u,\gamma}$}\label{section 2.2}

We now show Theorem \ref{theorem 1.2}. The main strategy of our proof is to decompose our operator into a low frequency part and a high frequency part. We want to bound the low frequency part by some classical operators, such as the Hardy-Littlewood maximal operator and the maximal truncated Hilbert transform. For the high frequency part, which is further divided into a series of operators $\{S_k\}_{k=0}^\infty$. We want to get a decay estimate for each of $S_k$. The main tools is the $TT^*$ argument, the stationary phase method, and also these lemmas have been introduced in Section \ref{section 2.1}.

\begin{proof}[Proof of Theorem \ref{theorem 1.2}] Suppose smooth function
$\psi:\ \mathbb{R}\rightarrow\mathbb{R}$ is supported on $\left\{t\in\mathbb{R}:\ \frac{1}{2}\leq |t|\leq 2\right\}$
such that $0\leq \psi(t)\leq 1$ and $\Sigma_{l\in \mathbb{Z}} \psi_l(t)=1$ for any $t\neq 0$,
where $\psi_l(t):=\psi (2^{-l}t)$.  From Remark \ref{remark 1.1}, we have that $\gamma$ is increasing on $(0,\infty)$ and $\lim_{t\rightarrow \infty}\gamma(t)=\infty$. We can define $n:\ \mathbb{R}\rightarrow\mathbb{Z}$  such that, for any given $x\in \mathbb{R}$,
\begin{align}\label{eq:3.1}
\frac{1}{\gamma (2^{n(x)+1})}\leq |u(x)|\leq \frac{1}{\gamma (2^{n(x)})} .
\end{align}
For any given $x\in \mathbb{R}$, let
$$\mathcal{C}_{u,\gamma,k}f(x):= \int_{-\infty}^{\infty} e^{iu(x)\gamma (t)}  f(x-t)   \psi_k(t)\,  \frac{\textrm{d}t}{t},$$
and decompose
\begin{align}\label{eq:yly1}\mathcal{C}_{u,\gamma}f(x)
   =  \sum_{k\leq n(x)-1}   \mathcal{C}_{u,\gamma,k}f(x)+  \sum_{k\geq n(x)}  \mathcal{C}_{u,\gamma,k}f(x)
  =: \mathcal{C}_{u,\gamma}^{(1)}f(x)+ \mathcal{C}_{u,\gamma}^{(2)}f(x).
\end{align}

For the low frequency part $\mathcal{C}_{u,\gamma}^{(1)}f$, let $\sum_{k\leq n(x)-1} \psi_k(t)=:\phi (t)$, then
\begin{eqnarray*}
\mathcal{C}_{u,\gamma}^{(1)}f(x)
   &=&  \mathrm{p.\,v.} \int_{|t|\leq   2^{n(x)}}\left[e^{iu(x)\gamma (t)}  -1\right]f(x-t) \phi (t)\, \frac{\textrm{d}t}{t}+\mathrm{p.\,v.}\int_{|t|\leq   2^{n(x)}}f(x-t) \phi (t) \,\frac{\textrm{d}t}{t}\\
   &=:& T_1f(x)+ T_2f(x).
\end{eqnarray*}

For $T_1f$, since $\gamma'$ is increasing on $(0,\infty)$ and $\gamma(0)=0$, we have $\frac{\gamma(t)}{t}$ is increasing on $(0,\infty)$. This, combined with the fact that $\gamma$ is either odd or even and \eqref{eq:3.1}, further implies that
\begin{align}\label{eq:bnu1}
T_1f(x)
   &\leq   \int_{|t|\leq   2^{n(x)}} \left|f(x-t)  \right| \left|u(x)\right| \frac{\gamma(2^{n(x)})}{2^{n(x)}}   \phi (t)\,\textrm{d}t\\
    &\leq    \frac{1}{2^{n(x)}}   \int_{|t|\leq   2^{n(x)}} \left|f(x-t)  \right| \,\textrm{d}t  \lesssim  Mf(x)\nonumber.
\end{align}
Here and hereafter, $M$ denotes the \emph{Hardy-Littlewood maximal operator} defined by setting
$$Mf(x):=\sup_{r>0} \frac{1}{2r}  \int_{-r}^{r} \left|f(x-t)  \right|\, \textrm{d}t, \quad\forall\, x\in\mathbb{R}.$$

For $T_2f$, we have
\begin{align}\label{eq:bnu2}
|T_2f(x)|
   &=   \left|\int_{|t|\leq   2^{n(x)}}f(x-t) \frac{\phi (t)-1}{t}\, \textrm{d}t+ \mathrm{p.\,v.}\int_{|t|\leq   2^{n(x)}}f(x-t) \,\frac{\textrm{d}t}{t}\right|\\
   &\leq   \int_{2^{n(x)-1}\leq|t|\leq   2^{n(x)}} \left|f(x-t)  \right| \left|\frac{\phi (t)-1}{t} \right| \,\textrm{d}t+ \mathcal{H}^*f(x)\nonumber\\
    &\leq     \frac{1}{2^{n(x)-1}}   \int_{|t|\leq   2^{n(x)}} \left|f(x-t)  \right|\, \textrm{d}t+ \mathcal{H}^*f(x)\lesssim  Mf(x)+ \mathcal{H}^*f(x)\nonumber,
\end{align}
where $\mathcal{H}^*$ is the \emph{maximal truncated Hilbert transform}, which is defined by setting
$$\mathcal{H}^*f(x):=\sup_{\varepsilon,R>0} \left| \int_{\varepsilon<|t|<R} f(x-t) \, \frac{\textrm{d}t}{t}\right|, \quad\forall\, x\in\mathbb{R}.$$
Therefore, from \eqref{eq:bnu1} and \eqref{eq:bnu2} we have
$$\mathcal{C}_{u,\gamma}^{(1)}f(x)\lesssim  Mf(x)+ \mathcal{H}^*f(x).$$
It is well-known that both $M$ and $\mathcal{H}^*$ are bounded on $L^p(\mathbb{R})$, we conclude that
$$\|\mathcal{C}_{u,\gamma}^{(1)}f\|_{L^{p}(\mathbb{R})}\lesssim \|f\|_{L^{p}(\mathbb{R})},$$
where $p\in (1,\infty)$.

For the high frequency part $\mathcal{C}_{u,\gamma}^{(2)}f$. We can then write
$$\mathcal{C}_{u,\gamma}^{(2)}f(x)= \sum_{k\geq 0}  \int_{-\infty}^{\infty} e^{iu(x)\gamma (t)}  f(x-t)   \psi_{k+n(x)}(t) \, \frac{\textrm{d}t}{t}=: \sum_{k\geq 0} S_kf(x).$$
For any given $k\geq 0$,
\begin{eqnarray*}
 |S_kf(x)|
    &\leq&    \int_{2^{k+n(x)-1}\leq|t|\leq   2^{k+n(x)+1}} \left| f(x-t)\right|  \frac{\left| \psi_{k+n(x)}(t) \right|}{|t|}\, \textrm{d}t   \\
    &\leq&    \frac{1}{2^{k+n(x)-1}} \int_{|t|\leq   2^{k+n(x)+1}} \left| f(x-t)\right| \,  \textrm{d}t\lesssim   Mf(x).
\end{eqnarray*}
From this and the well-known $L^p(\mathbb{R})$ boundedness of $M$, we have that
\begin{align}\label{eq:3.2}
\|S_kf\|_{L^{p}(\mathbb{R})}\lesssim \|f\|_{L^{p}(\mathbb{R})},
\end{align}
and the bound depends only on $p$, where $p\in (1,\infty)$.
To make the summation over $k\geq0$, we need a decay estimate for $\|S_kf\|_{L^{p}(\mathbb{R})}$. To this aim, we claim that there exists a positive constant $\omega_0$ such that, for any $k\geq 0$,
\begin{align}\label{eq:3.3}
\|S_kf\|_{L^{2}(\mathbb{R})}\lesssim 2^{-\omega_0 k}\|f\|_{L^{2}(\mathbb{R})}.
\end{align}
Then, by interpolating between \eqref{eq:3.2} and \eqref{eq:3.3}, we obtain a positive constant $\omega_p$ such that
$$\|S_kf\|_{L^{p}(\mathbb{R})}\lesssim 2^{-\omega_p k}\|f\|_{L^{p}(\mathbb{R})}.$$
This allows us to sum up $k\geq 0$ and to obtain
$$\|\mathcal{C}_{u,\gamma}^{(2)}f\|_{L^{p}(\mathbb{R})}\lesssim \|f\|_{L^{p}(\mathbb{R})}$$
for any given $p\in (1,\infty)$.
Therefore it remains is to prove \eqref{eq:3.3}. We use the $TT^*$ argument which was introduced by Stein and Wainger in \cite{SW}. The dual operator of $S_k$ is given by
$$S_k^*g(y)=\mathrm{p.\,v.}\int_{-\infty}^{\infty} e^{-iu(z)\gamma (z-y)}    \psi(2^{-n(z)-k}(z-y)) \frac{g(z)}{z-y}\,  \textrm{d}z, \quad \forall\, y\in\mathbb{R}.$$
Therefore,
\begin{align}\label{eq:hx7}
& S_kS_k^*f(x)\\
   &\quad=   \mathrm{p.\,v.}\int_{-\infty}^{\infty} \mathrm{p.\,v.}\int_{-\infty}^{\infty} e^{-iu(z)\gamma (z-x+t)}  \frac{\psi(2^{-n(z)-k}(z-x+t))}{z-x+t}e^{iu(x)\gamma (t)}  \frac{\psi(2^{-n(x)-k}t)}{t} \, \textrm{d}t f(z)  \,\textrm{d}z.\nonumber
\end{align}
In the following calculation, without loss of generality, we may assume that $2^{n(x)}\leq 2^{n(z)}$. Let $\xi:=x-z$. Then the kernel of $S_kS_k^*$ can be written as
\begin{align}\label{eq:3.4}
 \mathrm{p.\,v.}\int_{-\infty}^{\infty} e^{-iu(z)\gamma (-\xi+t)}  \frac{\psi(2^{-n(z)-k}(-\xi+t))}{-\xi+t} e^{iu(x)\gamma (t)}  \frac{\psi(2^{-n(x)-k}t)}{t} \, \textrm{d}t.
\end{align}
We replace $2^{-n(x)-k}t$ by $t$,
\begin{align}\label{eq:3.5}
\mathrm{p.\,v.}\int_{-\infty}^{\infty} e^{-iu(z)\gamma (-\xi+2^{n(x)+k} t)}  \frac{\psi(    -\xi2^{-n(z)-k}+\frac{2^{n(x)}}{2^{n(z)}} t)}{-\xi+2^{n(x)+k} t} e^{iu(x)\gamma (2^{n(x)+k} t)}  \frac{\psi(t)}{t} \, \textrm{d}t.
\end{align}
Now, we further set $0<h:=\frac{2^{n(x)}}{2^{n(z)}}\leq1$ and $s:=\frac{\xi}{2^{n(z)+k}}$. Then the kernel becomes
\begin{align}\label{eq:3.7}
\frac{1}{2^{n(z)+k} }\,\mathrm{p.\,v.}\int_{-\infty}^{\infty} e^{iu(x)\gamma (2^{n(x)+k} t)-iu(z)\gamma \left(2^{n(z)+k}[ht-s]  \right)}  \frac{\psi\left( ht-s\right)}{ht-s}   \frac{\psi(t)}{t} \, \textrm{d}t.
\end{align}
To evaluate the above integral, we use a estimate from the following Proposition \ref{proposition 3.1}.
In fact, noticing $\frac{x-z}{2^{n(z)+k}}=s$, by \eqref{eq:3.8} of Proposition \ref{proposition 3.1}, we have therefore
\begin{eqnarray*}
 | S_kS_k^*f(x)|
    &=&\left| \int_{-\infty}^{\infty} \frac{1}{2^{n(z)+k} }\,\mathrm{p.\,v.}\int_{-\infty}^{\infty} e^{iu(x)\gamma (2^{n(x)+k} t)-iu(z)\gamma \left(2^{n(z)+k}[ ht-s ]  \right)}  \frac{\psi\left( ht-s\right)}{ht-s}   \frac{\psi(t)}{t} \, \textrm{d}t f(z) \, \textrm{d}z\right|\\
    &\lesssim&  \int_{-\infty}^{\infty} \frac{1}{2^{n(z)+k} }\left\{ \chi_{[-2^{-k r_1},2^{- k r_1}]}(s)+ 2^{-k r_2} \chi_{[-4,4]}(s) \right\}|f(z) |\, \textrm{d}z\\
     &\lesssim&  \frac{2^{-k r_1}}{2^{n(z)+k} 2^{-k r_1} }  \int_{\frac{|x-z|}{2^{n(z)+k}} \leq 2^{-k r_1} } |f(z) | \,\textrm{d}z+  \frac{2^{-k r_2}}{2^{n(z)+k}  }  \int_{\frac{|x-z|}{2^{n(z)+k}} \leq 4 } |f(z) | \,\textrm{d}z \\
     & \lesssim&   2^{-k r_1} Mf(x)+  2^{-k r_2} Mf(x)\lesssim   2^{-k r_0} Mf(x),
\end{eqnarray*}
where $\gamma_0:=\min\left\{r_1, r_2 \right\}$.
Since $M$ is bounded on $L^2(\mathbb{R})$ and hence
$$\|S_k\|_{L^2(\mathbb{R})\rightarrow L^2(\mathbb{R})}=\|S_kS_k^*\|^{\frac{1}{2}}_{L^2(\mathbb{R})\rightarrow L^2(\mathbb{R})}\lesssim 2^{- \frac{r_0}{2}k}.$$
This is \eqref{eq:3.3}, which completes the proof of Theorem \ref{theorem 1.2}.
\end{proof}

\begin{proposition}\label{proposition 3.1}
There exist positive constants $r_1$ and $r_2$ such that
\begin{align}\label{eq:3.8}
&\left|\mathrm{p.\,v.}\int_{-\infty}^{\infty} e^{iu(x)\gamma (2^{n(x)+k} t)-iu(z)\gamma \left(2^{n(z)+k}[ht-s]  \right)}  \frac{\psi\left( ht-s\right)}{ht-s}   \frac{\psi(t)}{t}  \,\textrm{d}t\right|\\
   &\quad\leq C \left\{ \chi_{[-2^{-k r_1},2^{- k r_1}]}(s)+ 2^{-k r_2} \chi_{[-4,4]}(s)\right\}\nonumber
\end{align}
for any $k\in \mathbb{N}$ and $x,z,s\in \mathbb{R}$, where $C$ is a positive constant independent of $k,x,z,s,u$.
\end{proposition}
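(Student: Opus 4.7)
The plan is to split the estimate into two regimes in $s$, namely a thin window $|s|\leq 2^{-kr_1}$ on which only a uniform bound is sought, and the remaining window $2^{-kr_1}<|s|\leq 4$ on which genuine oscillatory decay in $k$ must be produced. First I dispose of the support reductions: since $\psi$ is supported in $\{\tfrac12\le|\cdot|\le 2\}$, the amplitude vanishes unless $|t|\in[\tfrac12,2]$ and $|ht-s|\in[\tfrac12,2]$; because $0<h\le 1$, this forces $|s|\le|ht|+2\le 4$, which accounts for the cutoff $\chi_{[-4,4]}(s)$. On the same compact set the amplitude is bounded by an absolute constant, so the trivial bound $|I|\le C$ yields the first term $C\chi_{[-2^{-kr_1},2^{-kr_1}]}(s)$ for any $r_1>0$ we choose.

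For the main estimate I examine the phase $\Phi(t):=u(x)\gamma(2^{n(x)+k}t)-u(z)\gamma(2^{n(z)+k}(ht-s))$. Using $2^{n(z)+k}h=2^{n(x)+k}$ and setting $\tau:=2^{n(x)+k}t$ and $c:=2^{n(z)+k}s$ one has
$$\Phi'(t)=2^{n(x)+k}\bigl[u(x)\gamma'(\tau)-u(z)\gamma'(\tau-c)\bigr],\qquad
\Phi''(t)=2^{2(n(x)+k)}\bigl[u(x)\gamma''(\tau)-u(z)\gamma''(\tau-c)\bigr].$$
The normalization \eqref{eq:3.1}, Remarks \ref{remark 1.1} and \ref{remark 1.2}, and hypotheses (i),(ii) give quantitative control on each term: $|u(x)\gamma'(\tau)|\approx\gamma(\tau)/[\tau\,\gamma(2^{n(x)})]$ on the support, and similarly for $z$. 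Since $|s|>2^{-kr_1}$ the shift $|c|$ is not too small compared to the scale $2^{n(z)+k}$, so the two pieces of $\Phi'$ cannot cancel identically to leading order; this is the point of separating out the $|s|\le 2^{-kr_1}$ window.

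Next I invoke Lemma \ref{lemma 2.3} with $(a,b)=(u(x),u(z))$, the shift $c$ as above, and a threshold $d$ taken to be an appropriate small positive power of $2^{-k}$. This partitions the $t$-support into a uniformly bounded (in $x,z,s,k,u$) number of intervals of two types: on \emph{type (a)} intervals one has $|u(x)\gamma'(\tau)-u(z)\gamma'(\tau-c)|>d$ so that $|\Phi'(t)|\gtrsim 2^{n(x)+k}d$, while on the (finitely many) \emph{type (b)} intervals the reverse inequality holds. On a type (a) interval I apply the variant of van der Corput given in Lemma \ref{lemma 2.1}: the required upper bound $|\Phi''(t)|\le\sigma_2$ follows from hypothesis (ii) together with Remark \ref{remark 1.1}, and after balancing the two resulting terms one obtains a bound $\lesssim 2^{-\alpha k}$ for some $\alpha>0$ depending only on the doubling constants of $\gamma$.

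The \emph{main obstacle}, and the most delicate step, is estimating the contribution from type (b) intervals, where $\Phi'$ is too small to produce oscillation directly. Here I must use hypothesis (iii) together with (iv). Conceptually, when $u(x)\gamma'(\tau)$ and $u(z)\gamma'(\tau-c)$ are close, a quantitative non-parallelism of $\gamma''/\gamma'$ at the points $\tau$ and $\tau-c$ — supplied precisely by the lower bound $|(\gamma''/\gamma')'|\ge C_3/\tau^2$ of (iii) — forces $u(x)\gamma''(\tau)$ and $u(z)\gamma''(\tau-c)$ to differ by a definite amount, hence $|\Phi''(t)|\gtrsim 2^{\beta k}$ for some $\beta>0$. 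Ordinary van der Corput with the second derivative then gives the desired decay $\lesssim 2^{-\beta k/2}$ on each such interval, and since their number is absolutely bounded (this is where (iv), via Lemma \ref{lemma 2.3}, is crucial to keep the count independent of $u(x),u(z),c$), the contributions sum to the same type of bound. Taking $r_2:=\min\{\alpha,\beta/2\}$ and fixing $r_1>0$ small enough that the regime $|s|\le 2^{-kr_1}$ is genuinely handled by the trivial bound yields \eqref{eq:3.8} with constants independent of $k,x,z,s,u$, as required.
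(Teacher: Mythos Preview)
Your overall architecture---trivial bound for $|s|\le 2^{-kr_1}$, then oscillatory decay on the rest of $|s|\le 4$ via a first--derivative/second--derivative dichotomy controlled by Lemma~\ref{lemma 2.3}---matches the paper's strategy, but there is a genuine gap: you have not split according to the size of $h=2^{n(x)-n(z)}\in(0,1]$, and without this your ``type~(b)'' argument breaks down.

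Concretely, write $\tau=2^{n(x)+k}t$ and $c=2^{n(z)+k}s$. Your claim is that on type~(b) intervals (where $u(x)\gamma'(\tau)\approx u(z)\gamma'(\tau-c)$), hypothesis~(iii) forces $|\Phi''(t)|=2^{2(n(x)+k)}|u(x)\gamma''(\tau)-u(z)\gamma''(\tau-c)|$ to be large. Carrying this out one finds that the relevant lower bound is essentially $|u(x)\gamma'(\tau)|\cdot\bigl|\tfrac{\gamma''}{\gamma'}(\tau)-\tfrac{\gamma''}{\gamma'}(\tau-c)\bigr|$, and by~(iii) and the mean value theorem the bracket is $\gtrsim C_3|c|/\xi^2$ with $\xi$ between $\tau$ and $\tau-c$. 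But $|\tau-c|=2^{n(z)+k}|ht-s|\in[2^{n(z)+k-1},2^{n(z)+k+1}]$, so $|\xi|$ can be as large as $\sim 2^{n(z)+k}=2^{n(x)+k}/h$; the resulting lower bound then carries a factor of $h^2$ (or at best $h$) and is useless when $h$ is very small. This is exactly why the paper isolates \emph{Case~A} ($h\le (4C_1^3C_4)^{-1}$): there one shows directly from \eqref{eq:3.12} that $|\Phi'(t)|\gtrsim 2^k\gamma'(2^{n(x)+k})/\gamma'(2^{n(x)})$ on the entire support (the $u(z)$--term is dominated because of the explicit $h$), and Lemma~\ref{lemma 2.1} alone gives $\lesssim 2^{-k}$, with no need for a second--derivative argument. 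Only in \emph{Case~B} ($h\approx 1$) does the paper run the $\Phi'/\Phi''$ dichotomy, and there the determinant computation \eqref{eq:3.23} (packaged via the matrix Lemma~\ref{lemma 2.2}) gives the pointwise bound $\sqrt{|\Phi'|^2+|\Phi''|^2}\gtrsim 2^{7k/8}\gamma'(2^{n(x)+k})/\gamma'(2^{n(x)})$ precisely because $h$ is bounded below.

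A second, related imprecision: your threshold $d$ ``a small positive power of $2^{-k}$'' is dimensionally wrong. The quantity $|u(x)\gamma'(\tau)-u(z)\gamma'(\tau-c)|$ lives at the scale $2^{-n(x)}\gamma'(2^{n(x)+k})/\gamma'(2^{n(x)})$ (cf.\ \eqref{eq:3.12}, \eqref{eq:3.18}), which depends on $n(x)$ and not just on $k$; a threshold depending on $k$ alone cannot simultaneously make Case~A entirely type~(a) and give decay on type~(a) via Lemma~\ref{lemma 2.1}. Once you introduce the $h$--split and choose $d$ at the correct $n(x)$--dependent scale, your scheme becomes the paper's proof; the matrix Lemma~\ref{lemma 2.2} is then the clean way to make the type~(b) lower bound on $|\Phi''|$ rigorous in Case~B.
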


\begin{proof}[Proof of Proposition \ref{proposition 3.1}] Since smooth function $\psi:\ \mathbb{R}\rightarrow\mathbb{R}$ is supported on $\left\{t\in\mathbb{R}:\ \frac{1}{2}\leq |t|\leq 2\right\}$ and $0<h\leq 1$, thus, $|t|\leq 2$, $|ht-s|\leq 2$ and $|s|\leq 4$.
Let
\begin{align}\label{eq:3.9}
Q(t):=u(x)\gamma (2^{n(x)+k} t)-u(z)\gamma (2^{n(z)+k}[ht-s] ), \quad \forall\, t\in\mathbb{R}.
\end{align}
It is clear that
\begin{align}\label{eq:3.10}
Q'(t)=u(x)2^{n(x)+k}\gamma' (2^{n(x)+k} t)-u(z)2^{n(z)+k} \gamma '(2^{n(z)+k}[ ht-s]  )h, \quad \forall\, t\in\mathbb{R},
\end{align}
and
\begin{align}\label{eq:3.11}
Q''(t)=u(x)2^{2(n(x)+k)}\gamma'' (2^{n(x)+k} t)-u(z)2^{2(n(z)+k)} \gamma''(2^{n(z)+k}[ ht-s]  )h^2, \quad \forall\, t\in\mathbb{R}.
\end{align}
To use lemmas \ref{lemma 2.1}, \ref{lemma 2.2} and \ref{lemma 2.3}, we need some estimates on $Q'$ and $Q''$. For this aim we consider two cases. We want to remind the reader the constants $C_1$ through $C_4$ are the same constants as in Theorem \ref{theorem 1.1} and Remark \ref{remark 1.1}.

{\bf Case~A}  $0<h\leq \frac{1}{4 C_1^3C_4}$.

Since $\frac{\gamma'(2t)}{\gamma'(t)}$ is decreasing on $(0,\infty)$, it follows that $\frac{\gamma'(2^kt)}{\gamma'(t)}=\frac{\gamma'(2^kt)}{\gamma'(2^{k-1}t)}\frac{\gamma'(2^{k-1}t)}{\gamma'(2^{k-2}t)}\cdots\frac{\gamma'(2t)}{\gamma'(t)}$ is decreasing on $(0,\infty)$ for any $k\in \mathbb{N}$. By Remark \ref{remark 1.1}, we know that $1\leq\frac{t\gamma'(t)}{\gamma(t)}\leq C_4$ for any $t\in (0,\infty)$. Noticing $\gamma$ is either odd or even, $\gamma'$ is increasing on $(0,\infty)$, \eqref{eq:3.1}, $|t|\leq 2$, $|ht-s|\leq 2$ and $\frac{\gamma'(2t)}{\gamma'(t)}\leq C_1$ for any $t\in (0,\infty)$, we obtain
\begin{align}\label{eq:3.12}
|Q'(t)|
   \geq&  \left|u(x)2^{n(x)+k}\gamma' (2^{n(x)+k} t)\right|-\left|u(z)2^{n(z)+k} \gamma '(2^{n(z)+k}[ ht-s ]  )\right|h\\
    \geq&  \left|\frac{1}{\gamma(2^{n(x)+1})}2^{n(x)+k}\gamma' \left(2^{n(x)+k} \frac{1}{2}\right)\right|-\left|\frac{1}{\gamma(2^{n(z)})}2^{n(z)+k} \gamma '(2^{n(z)+k}2  )\right|h\nonumber\\
    =&  \left|\frac{2^{n(x)+1}\gamma'(2^{n(x)+1})}{\gamma(2^{n(x)+1})}\frac{2^{n(x)+k}}{2^{n(x)+1}}\frac{\gamma' (2^{n(x)+k} \frac{1}{2})}{\gamma'(2^{n(x)+k})}\frac{\gamma' (2^{n(x)+k})}{\gamma'(2^{n(x)})}\frac{\gamma'(2^{n(x)})}{\gamma'(2^{n(x)+1})}\right|\nonumber\\
    &  -\left|\frac{2^{n(z)}\gamma'(2^{n(z)})}{\gamma(2^{n(z)})}\frac{2^{n(z)+k}}{2^{n(z)}}\frac{\gamma' (2^{n(z)+k} 2)}{\gamma'(2^{n(x)+k})}\frac{\gamma' (2^{n(z)+k})}{\gamma'(2^{n(z)})}\right|h\nonumber\\
    \geq&  \frac{1}{2C_1^2} 2^k \frac{\gamma' (2^{n(x)+k})}{\gamma'(2^{n(x)})}-C_1C_4 2^k \frac{\gamma' (2^{n(x)+k})}{\gamma'(2^{n(x)})} h
    \geq  \left( \frac{1}{4C_1^2}  \right)2^k \frac{\gamma' (2^{n(x)+k})}{\gamma'(2^{n(x)})}. \nonumber
\end{align}
As \eqref{eq:3.12} and using the fact that $\frac{t\gamma''(t)}{\gamma'(t)}\leq C_2$ for any $t\in (0,\infty)$ and $h\leq 1$, we find that
\begin{align}\label{eq:3.13}
|Q''(t)|
   \leq&  \left|u(x)2^{2(n(x)+k)}\gamma'' (2^{n(x)+k} t)\right|+\left|u(z)2^{2(n(z)+k)} \gamma''(2^{n(z)+k}[ ht-s ]  )h^2\right|\\
      =&  \left|u(x)2^{2(n(x)+k)}\frac{(2^{n(x)+k} t)\gamma'' (2^{n(x)+k} t)}{\gamma' (2^{n(x)+k} t)} \frac{\gamma' (2^{n(x)+k} t)}{(2^{n(x)+k} t)} \right|\nonumber\\
      &+\left|u(z)2^{2(n(z)+k)} \frac{(2^{n(z)+k}[ ht-s ])\gamma''(2^{n(z)+k}[ ht-s ])}{\gamma'(2^{n(z)+k}[ht-s]) }\frac{\gamma'(2^{n(z)+k}[ ht-s ])}{(2^{n(z)+k}[ ht-s ])}h^2\right|\nonumber\\
      \leq& 2C_2 \left|u(x)2^{(n(x)+k)} \gamma' (2^{n(x)+k} 2) \right|+2C_2\left|u(z)2^{(n(z)+k)} \gamma'(2^{n(z)+k}2)\right|\nonumber\\
      \leq& 2C_1C_2 \left|\frac{1}{\gamma(2^{n(x)})}2^{(n(x)+k)} \gamma' (2^{n(x)+k} ) \right|+2C_1C_2\left|\frac{1}{\gamma(2^{n(z)})}2^{(n(z)+k)} \gamma'(2^{n(z)+k})\right|\nonumber\\
     =& 2C_1C_2 \left|\frac{2^{n(x)}\gamma'(2^{n(x)})}{\gamma(2^{n(x)})}\frac{2^{(n(x)+k)} }{2^{n(x)}}\frac{\gamma' (2^{n(x)+k} )}{\gamma'(2^{n(x)})} \right|\nonumber\\
     &+2C_1C_2\left|\frac{2^{n(z)}\gamma'(2^{n(z)})}{\gamma(2^{n(z)})}\frac{2^{(n(z)+k)}}{2^{n(z)}} \frac{\gamma'(2^{n(z)+k})}{\gamma'(2^{n(z)})}\right|\nonumber\\
 \leq& 2C_1C_2C_4 2^k\frac{\gamma' (2^{n(x)+k})}{\gamma'(2^{n(x)})}+  2C_1C_2C_4 2^k\frac{\gamma' (2^{n(z)+k})}{\gamma'(2^{n(z)})}
 \leq 4C_1C_2C_4 2^k\frac{\gamma' (2^{n(x)+k})}{\gamma'(2^{n(x)})}. \nonumber
\end{align}
Combining \eqref{eq:3.12} and \eqref{eq:3.13}, using Lemma \ref{lemma 2.1} and (\cite{S}, P.334, Corollary), and the fact that $\gamma'$ is increasing on $(0,\infty)$, we conclude that
\begin{align}\label{eq:3.14}
&\left|\mathrm{p.\,v.}\int_{-\infty}^{\infty}  e^{iu(x)\gamma (2^{n(x)+k} t)-iu(z)\gamma \left(2^{n(z)+k}[ht-s]  \right)}  \frac{\psi\left( ht-s\right)}{ht-s}   \frac{\psi(t)}{t}  \,\textrm{d}t\right|\\
&\quad \lesssim \frac{1}{\left( \frac{1}{4C_1^2}  \right)2^k \frac{\gamma' (2^{n(x)+k})}{\gamma'(2^{n(x)})}} +\frac{4C_1C_2C_4 2^k\frac{\gamma' (2^{n(x)+k})}{\gamma'(2^{n(x)})}}{\left[ \left( \frac{1}{4C_1^2}  \right)2^k \frac{\gamma' (2^{n(x)+k})}{\gamma'(2^{n(x)})}  \right] ^2 }
 \lesssim   \frac{1}{2^k}. \nonumber
\end{align}
Thus, in this case, \eqref{eq:3.8} holds with $r_2=1$ and arbitrary positive constant $r_1$.

{\bf Case~B}  $\frac{1}{4 C_1^3C_4}<h\leq 1$.

If $|s|\leq 2^{-\frac{k}{8}}$, since $\psi:\ \mathbb{R}\rightarrow\mathbb{R}$ is supported on $\left\{t\in \mathbb{R}:\ \frac{1}{2}\leq |t|\leq 2\right\}$, it follows that the integral in \eqref{eq:3.8} is
bounded by $C$. Thus, in this case, \eqref{eq:3.8} holds with $r_1=\frac{1}{8}$ and arbitrary positive constant $r_2$. In the remainder, we only to consider the case $|s|\geq 2^{-\frac{k}{8}}$. We write
\begin{equation}\label{eq:3.15}
\left(
\begin{array}{ccc}
 Q'(t)    \\
 Q''(t)
\end{array}
\right)
=
M_{t,s}
\Upsilon,
\end{equation}
where $M_{t,s}$ is the $2\times2$ matrix
\begin{equation}\label{eq:3.16}
M_{t,s}:=
\left(
\begin{array}{ccc}
 1&  h  \\
 \frac{2^{n(x)+k}\gamma''(2^{n(x)+k} t)}{ \gamma'(2^{n(x)+k}t)}&\frac{2^{n(z)+k}\gamma''(2^{n(z)+k} [ht-s])}{\gamma'(2^{n(z)+k}[ht-s])}h^2
\end{array}
\right)
\end{equation}
and $\Upsilon$ is the vector
\begin{equation}\label{eq:3.17}
\Upsilon:=
\left(
\begin{array}{ccc}
  u(x)2^{n(x)+k}\gamma' (2^{n(x)+k} t)    \\
 -u(z)2^{n(z)+k} \gamma '(2^{n(z)+k}[ht-s]  )
\end{array}
\right).
\end{equation}
We may compute immediately as \eqref{eq:3.12} that
\begin{align}\label{eq:3.18}
|\Upsilon|
   \geq  \left|u(x)2^{n(x)+k}\gamma' (2^{n(x)+k} t)\right|
   \geq   \frac{1}{2C_1^2} 2^k \frac{\gamma' (2^{n(x)+k})}{\gamma'(2^{n(x)})}.
\end{align}
Moreover, let
$$a_0:=\frac{2^{n(x)+k}t\gamma''(2^{n(x)+k} t)}{ \gamma'(2^{n(x)+k}t)}$$
and
$$b_0:=\frac{2^{n(z)+k}( ht-s )\gamma''(2^{n(z)+k} [ht-s])}{\gamma'(2^{n(z)+k}[ht-s])}.$$
We can rewrite $M_{t,s}$ as
\begin{equation}\label{eq:3.19}
M_{t,s}=
\left(
\begin{array}{ccc}
 1&  h  \\
 a_0 \frac{1}{t} & b_0 \frac{h^2}{ht-s}
\end{array}
\right).
\end{equation}
From the fact that $\frac{t\gamma''(t)}{\gamma'(t)}\leq C_2$ for any $t\in (0,\infty)$, it implies that $|a_0 |\leq C_2$ and $| b_0| \leq C_2$, which further follows that
\begin{align}\label{eq:3.21}
\|M_{t,s}\|=\sup_{|x|=1}|M_{t,s}x|\lesssim 1.
\end{align}
From \eqref{eq:3.16} and Theorem \ref{theorem 1.1}(iv), together with the fact that $|s|\leq 4$, $h=\frac{2^{n(x)}}{2^{n(z)}}$ and the generalised mean value theorem, we have that there exists a positive constant $\theta \in [0, 1]$ such that
\begin{align}\label{eq:3.23}
|\textrm{det} M_{t,s}|
   =& h2^{n(x)+k} \left|\frac{\gamma''(2^{n(x)+k} t-2^{n(z)+k}  s )}{\gamma'(2^{n(x)+k} t-2^{n(z)+k}  s)}- \frac{\gamma''(2^{n(x)+k} t)}{\gamma'(2^{n(x)+k}t)}  \right|\\
 =& h2^{n(x)+k} \left|\left(\frac{\gamma''}{\gamma'}\right)'\left(2^{n(x)+k} t-2^{n(z)+k}  s\theta\right) 2^{n(z)+k}  s\right|\nonumber\\
  \geq& C_3 h2^{n(x)+k}  \frac{1}{\left[2^{n(x)+k} t-2^{n(z)+k}  s\theta\right]^2}  \left| 2^{n(x)+k}  s\right|\nonumber\\
   =& C_3 h  \frac{1}{\left( t-\frac{1}{h} s\theta\right)^2}  \left|  s\right|
   \gtrsim 2^{-\frac{k}{8}}.\nonumber
\end{align}
Combining \eqref{eq:3.18}, \eqref{eq:3.21}, \eqref{eq:3.23}, and Lemma \ref{lemma 2.2} with $n=2$, we have therefore
\begin{align}\label{eq:3.24}
  M_{t,s}\Upsilon \geq |\textrm{det} M_{t,s}| \|M_{t,s}\|^{-1} |\Upsilon|  \gtrsim 2^{\frac{7k}{8}} \frac{\gamma' (2^{n(x)+k})}{\gamma'(2^{n(x)})}
\end{align}
and so
\begin{align}\label{eq:3.25}
  \sqrt{ \left[Q'(t)\right]^2+\left[Q''(t)\right]^2 } \gtrsim 2^{\frac{7k}{8}}\frac{\gamma' (2^{n(x)+k})}{\gamma'(2^{n(x)})} .
\end{align}
By pigeonholing, there are two cases: If $|Q'(t)|\gtrsim2^{\frac{7k}{8}} \frac{\gamma' (2^{n(x)+k})}{\gamma'(2^{n(x)})}$, notice that $h=\frac{2^{n(x)}}{2^{n(z)}}$, by Lemma \ref{lemma 2.3}, let $a:=u(x)2^{n(x)+k}$, $b:=u(z)2^{n(z)+k} h$, $c=2^{n(z)+k}s$, $d:=2^{\frac{7k}{8}} \frac{\gamma' (2^{n(x)+k})}{\gamma'(2^{n(x)})}$ and $t:=2^{n(x)+k} t$, we see that this case only happen on at most a finite number of intervals, and the number of intervals is independent of $x,z,s,k$ and $u$. Using \eqref{eq:3.13}, from Lemma \ref{lemma 2.1} and (\cite{S}, P.334, Corollary), similarly to \eqref{eq:3.14} we obtain that the integral in \eqref{eq:3.8} on this portion is established with $r_2=\frac{3}{4}$ and arbitrary positive constant $r_1$. If $|Q''(t)|\gtrsim 2^{\frac{7k}{8}} \frac{\gamma' (2^{n(x)+k})}{\gamma'(2^{n(x)})}$, by our argument in the first case, this case also only happen on at most a finite number of intervals, by the van der Corput lemma's, similarly to \eqref{eq:3.14} we conclude that the integral in \eqref{eq:3.8} on this portion is established with $r_2=\frac{7}{16}$ and arbitrary positive constant $r_1$. Altogether we have now show that the integral in \eqref{eq:3.8} is established with $r_2=\frac{7}{16}$ and arbitrary positive constant $r_1$. This finishes the proof of Proposition \ref{proposition 3.1}.
\end{proof}

\section{ Proof of Theorem \ref{theorem 1.1}}

First we notice that the $L^2(\mathbb{R}^{2})$ estimate for \eqref{Hilbert transform} follows from Theorem \ref{theorem 1.2}. In deed, from \cite{PS}, it follows that
$$\|H_{u,\gamma}\|_{L^2(\mathbb{R}^{2})\rightarrow L^2(\mathbb{R}^{2})} \leq \sup_{\lambda\in \mathbb{R}} \|S_{\lambda}\|_{L^2(\mathbb{R})\rightarrow L^2(\mathbb{R})} ,$$
where
$$S_\lambda f(x):=\mathrm{p.\,v.}\int_{-\infty}^{\infty}  e^{-i\lambda u(x)\gamma(t)}  f(x-t)\,\frac{\textrm{d}t}{t}, \quad\forall\, x\in\mathbb{R}.$$
Since the $L^2(\mathbb{R}^{2})$ boundedness of $H_{u,\gamma}$ will not depend on $u$, we need only to establish the $L^2(\mathbb{R})$ estimate for
$$\mathcal{C}_{u,\gamma}f(x)= \mathrm{p.\,v.}\int_{-\infty}^{\infty}  e^{iu(x)\gamma (t)}  f(x-t)\,\frac{\textrm{d}t}{t}, \quad\forall\, x\in\mathbb{R},$$
with the bound independent of $u$. This has been proved in Theorem \ref{theorem 1.2}.

\subsection{Single annulus $L^p(\mathbb{R}^{2})$ estimate for the Hilbert transform $H_{u,\gamma}$}\label{section 3}

Before establishing the $L^p(\mathbb{R}^{2})$ estimate for \eqref{Hilbert transform}, we warm up ourselves by establishing the following single annulus $L^p(\mathbb{R}^{2})$ estimate. There are many other works about this topic, such as \cite{B1} and \cite{LL1}.
Recall that $\psi:\ \mathbb{R}\rightarrow\mathbb{R}$ is supported on $\left\{t\in \mathbb{R}:\ \frac{1}{2}\leq |t|\leq 2\right\}$ such that $0\leq \psi(t)\leq 1$ and $\Sigma_{l\in \mathbb{Z}} \psi_l(t)=1$ for any $t\neq 0$, where $\psi_l(t)=\psi (2^{-l}t)$. For any $l\in \mathbb{Z}$, let $P_l$ denotes the Littlewood-Paley projection in the second variable corresponding to $\psi_l$. That is
$$P_lf(x_1,x_2):=\int_{-\infty}^{\infty}  f(x_1,x_2-z)\check{\psi}_l(z)\,\textrm{d}z.$$

\begin{theorem}\label{theorem 1.3}
Let $u$ and $\gamma$ be the same as in Theorem \ref{theorem 1.1}. Then for any given $p\in (1,\infty)$, we have
$$\|H_{u,\gamma}P_lf\|_{L^{p}(\mathbb{R}^{2})}\leq C \|P_lf\|_{L^{p}(\mathbb{R}^{2})},$$
uniformly in $l\in \mathbb{Z}$, and the bound $C$ is a positive constant independent of $u$.
\end{theorem}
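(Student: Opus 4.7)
The plan is to adapt the Littlewood--Paley strategy used for Bateman--Thiele type theorems to our setting, leveraging the $L^2$ oscillatory integral estimate of Proposition~\ref{proposition 3.1}. Fix $l\in\mathbb{Z}$. As anticipated in Remark~\ref{remark 1.4}, I introduce the map $n_l:\mathbb{R}\to\mathbb{Z}$ by
\begin{equation*}
\frac{1}{\gamma(2^{n_l(x_1)+1})}\leq 2^l|u(x_1)|\leq \frac{1}{\gamma(2^{n_l(x_1)})},
\end{equation*}
so that $2^{n_l(x_1)}$ is the critical scale at which the phase $u(x_1)\gamma(t)\xi_2$ with $|\xi_2|\approx 2^l$ becomes of unit size. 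Dyadically decompose the kernel in $t$ and reindex:
\begin{equation*}
H_{u,\gamma}P_lf(x_1,x_2)=\sum_{k\in\mathbb{Z}}H_{u,\gamma,k+n_l(x_1)}P_lf(x_1,x_2),
\end{equation*}
where $H_{u,\gamma,j}$ is $H_{u,\gamma}$ with $\psi_j(t)$ inserted. The goal is to bound each piece on $L^p(\mathbb{R}^2)$ with geometric decay in $|k|$ and sum.

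For $k\leq -1$ the shift $u(x_1)\gamma(t)$ on the support $|t|\approx 2^{k+n_l(x_1)}$ is small relative to the oscillation scale $2^{-l}$ of $P_lf$. I would expand $e^{-iu(x_1)\gamma(t)\xi_2}=1+(e^{-iu(x_1)\gamma(t)\xi_2}-1)$ on the Fourier side in $x_2$: the ``$1$''~term reduces to the one-dimensional Hilbert transform in $x_1$ applied to $P_lf$ (uniformly bounded on $L^p$), while the remainder has size $\lesssim 2^l|u(x_1)|\gamma(|t|)\lesssim \gamma(2^{k+n_l(x_1)})/\gamma(2^{n_l(x_1)})$, which decays like $2^{-\lambda|k|}$ for some $\lambda>0$ by the convexity of $\gamma$ together with Remarks~\ref{remark 1.1} and~\ref{remark 1.2}, leaving a harmless Hardy--Littlewood maximal bound in $x_1$.

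For $k\geq 0$ the $L^2$ bound comes from Plancherel in $x_2$ followed by freezing $\xi_2$ with $|\xi_2|\approx 2^l$ and then rescaling $t\mapsto 2^{k+n_l(x_1)}t$; the defining relation of $n_l(x_1)$ absorbs $2^l u(x_1)\gamma(2^{n_l(x_1)})\approx 1$ into the phase and casts it exactly in the form handled by Proposition~\ref{proposition 3.1}, which, via a $TT^*$ argument as in the proof of Theorem~\ref{theorem 1.2}, yields $\|H_{u,\gamma,k+n_l(x_1)}P_lf\|_{L^2}\lesssim 2^{-\omega k}\|P_lf\|_{L^2}$ for some $\omega>0$. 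For $L^p$ with $p\neq 2$, pointwise majorize
\begin{equation*}
|H_{u,\gamma,k+n_l(x_1)}P_lf(x_1,x_2)|\lesssim \mathcal{M}^{\mathrm{sh}}_{k,l}(P_lf)(x_1,x_2),
\end{equation*}
where $\mathcal{M}^{\mathrm{sh}}_{k,l}$ is a maximal operator in $x_1$ shifted in the second coordinate by $u(x_1)\gamma(2^{k+n_l(x_1)})$, i.e.\ by $\gamma(2^{k+n_l(x_1)})/\gamma(2^{n_l(x_1)})\lesssim 2^{Ck}$ units of $2^{-l}$. Classical shifted maximal inequalities give $L^p$ bounds for such operators that grow at most logarithmically in the shift, hence polynomially in $k$, so Riesz--Thorin interpolation against the exponential $L^2$ decay produces an $L^p$ bound with geometric decay summable over $k\geq 0$.

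The main obstacle, flagged in Remark~\ref{remark 1.4}, is that both the shift and the scale of $\mathcal{M}^{\mathrm{sh}}_{k,l}$ depend on $x_1$ through $n_l(x_1)$, whereas the standard shifted maximal inequalities (Lacey--Li, Bateman--Thiele) assume a constant shift. Overcoming this requires partitioning $\mathbb{R}$ into the level sets $\{x_1:n_l(x_1)=m\}$ and combining the per-level estimates while keeping the dependence on $m$ sub-exponential in $k$, which is precisely the delicate dyadic-piece control that Remark~\ref{remark 1.4} identifies as the main new technical contribution beyond the homogeneous case treated in \cite{G2}. Once this refined shifted-maximal bound is in hand, summing over $k\in\mathbb{Z}$ gives the desired uniform-in-$l$ estimate $\|H_{u,\gamma}P_lf\|_{L^p}\lesssim \|P_lf\|_{L^p}$.
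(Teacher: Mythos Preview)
Your overall architecture---the dyadic decomposition around the critical scale $n_l(x_1)$, the low/high frequency split, the $L^2$ decay for $k\geq 0$ via the $TT^*$ argument of Theorem~\ref{theorem 1.2}, and interpolation---matches the paper's proof. The low-frequency treatment (comparing to the one-dimensional truncated Hilbert transform and controlling the remainder by $2^l|u(x_1)|\gamma(|t|)$) is also essentially the paper's argument, carried out on the physical side in the paper rather than on the Fourier side as you sketch.

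Where you diverge is the high-frequency $L^p$ bound, and here you are working far harder than necessary for \emph{this} theorem. For a single fixed $l$, the $L^p$ bound on each piece $H_{u,\gamma,k+n_l(x_1)}P_lf$ is trivial and uniform in $k$: by Minkowski's inequality and the translation invariance of $\|\cdot\|_{L^p_{x_2}}$,
\[
\left\|\int P_lf(\cdot_1-t,\cdot_2-u(\cdot_1)\gamma(t))\,\psi_{k+n_l(\cdot_1)}(t)\,\frac{\mathrm{d}t}{t}\right\|_{L^p_{x_2}}
\lesssim M\bigl(\|P_lf(\cdot,\cdot_2)\|_{L^p_{x_2}}\bigr)(x_1),
\]
and then take $L^p_{x_1}$. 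This is exactly the paper's route (cf.\ \eqref{eq:3.34}); interpolating this uniform bound against the $L^2$ decay already gives a summable bound in $k$ with no shifted maximal operators whatsoever.

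The obstacle you flag---the $x_1$-dependence of $n_l(x_1)$ in a shifted maximal bound---and the machinery you propose are exactly what the paper deploys for Theorem~\ref{theorem 1.1}, where one must control the square function $\bigl(\sum_l|H_{u,\gamma,k+n_l(\cdot_1)}P_lf|^2\bigr)^{1/2}$ and the Minkowski trick above is unavailable because the $L^p_{x_2}$ norm cannot be pushed inside the $\ell^2_l$ sum. Even there, the paper does not partition into level sets of $n_l$; it instead proves a pointwise bound by an average of shifted maximal operators in the \emph{second} variable (cf.\ \eqref{eq:3.62}) and invokes the vector-valued estimate for $M_2^{(\sigma)}$. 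So your plan would ultimately work, but it imports the genuinely hard part of Theorem~\ref{theorem 1.1} into a theorem whose high-frequency $L^p$ estimate is a one-line application of Minkowski.
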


\begin{proof}[Proof of Theorem \ref{theorem 1.3}]
By an anisotropic scaling
$$x_1\rightarrow x_1, x_2\rightarrow 2^{-l}x_2,$$
we consider only the case that $l=0$. Let us set
$$H_{u,\gamma,k}P_0f(x_1,x_2):=\int_{-\infty}^{\infty} P_0f(x_1-t,x_2-u(x_1)\gamma(t))\psi_k(t)\,\frac{\textrm{d}t}{t}.$$
Let $n:\ \mathbb{R}\rightarrow\mathbb{Z}$ such that for any $x_1\in \mathbb{R}$
\begin{align}\label{eq:3.27}
\frac{1}{\gamma (2^{n(x_1)+1})}\leq |u(x_1)|\leq \frac{1}{\gamma (2^{n(x_1)})} .
\end{align}
We decompose
\begin{align}\label{eq:yly2}
H_{u,\gamma}P_0f(x_1,x_2)
   =&  \sum_{k\leq n(x_1)-1}  H_{u,\gamma,k}P_0f(x_1,x_2)+  \sum_{k\geq n(x_1)} H_{u,\gamma,k}P_0f(x_1,x_2)\\
   =:& H^{(1)}_{u,\gamma}P_0f(x_1,x_2)+H^{(2)}_{u,\gamma}P_0f(x_1,x_2).\nonumber
\end{align}

For $H^{(1)}_{u,\gamma}P_0f$, let $\rho$ be a non-negative smooth function supported on $\left\{\xi\in \mathbb{R}:\ \frac{1}{4}\leq |\xi|\leq 4\right\}$ such that $\rho=1$ on $\left\{\xi\in \mathbb{R}:\ \frac{1}{2}\leq |\xi|\leq 2\right\}$, and let $\mathbb{P}_0f(x_1,x_2):=\int_{-\infty}^{\infty}  f(x_1,x_2-s)\check{\rho}(s)\,\textrm{d}s$.
By Fourier transform, it is easy to check that
\begin{align}\label{eq:717}\mathbb{P}_0P_0f=P_0f.\end{align}
We first consider $H^{(1)}_{u,\gamma}\mathbb{P}_0f$. Let $\sum_{k\leq n(x_1)-1} \psi_k(t)=:\phi (t)$, then
\begin{align}\label{eq:3.28}
H^{(1)}_{u,\gamma}\mathbb{P}_0f(x_1,x_2)
   =  \mathrm{p.\,v.} \int_{|t|\leq   2^{n(x_1)}}\mathbb{P}_0f(x_1-t,x_2-u(x_1)\gamma(t))\phi (t)\,\frac{\textrm{d}t}{t}.
\end{align}
Let us consider an approximate operator
$$\tilde{H}\mathbb{P}_0f(x_1,x_2):= \mathrm{p.\,v.}\int_{|t|\leq   2^{n(x_1)}}\mathbb{P}_0f(x_1-t,x_2)\phi (t)\,\frac{\textrm{d}t}{t}.$$
As \eqref{eq:bnu2}, we have
\begin{align}\label{eq:3.29}
\tilde{H}\mathbb{P}_0f(x_1,x_2)
  \lesssim M_1\mathbb{P}_0f(x_1,x_2)+\mathcal{\tilde{H}}^*_1\mathbb{P}_0f(x_1,x_2).
\end{align}
Here and hereafter, $\mathcal{\tilde{H}}^*_1$ denotes the maximal truncated Hilbert transform applied in the first variable, $M_1$ and $M_2$ denote the Hardy-Littlewood maximal operator applied in the first variable and the second variable, respectively. Since both $M_1$ and $\mathcal{\tilde{H}}^*_1$ are known to be bounded on $L^p(\mathbb{R}^2)$, from \eqref{eq:3.29} we may conclude that
\begin{align}\label{eq:3.30}
\|\tilde{H}\mathbb{P}_0f\|_{L^{p}(\mathbb{R}^2)}\lesssim \|\mathbb{P}_0f\|_{L^{p}(\mathbb{R}^2)}\lesssim \|f\|_{L^{p}(\mathbb{R}^2)}
\end{align}
for any given $p\in (1,\infty)$.

Now we turn to the difference between $H^{(1)}_{u,\gamma}\mathbb{P}_0f$ and $\tilde{H}\mathbb{P}_0f$, which can be written as
\begin{align}\label{eq:3.31}
\mathrm{p.\,v.}\int_{|t|\leq   2^{n(x_1)}}\int_{-\infty}^{\infty}  f(x_1-t,x_2-z) \left[ \check{\rho}(z-u(x_1)\gamma(t))- \check{\rho}(z) \right]\textrm{d}z\phi (t)\,\frac{\textrm{d}t}{t}.
\end{align}
Since $\gamma$ is increasing on $(0,\infty)$ and $|t|\leq   2^{n(x_1)}$, we have $|u(x_1)\gamma(t)|\leq |u(x_1)|\gamma(2^{n(x_1)})\leq 1$. Then apply the mean value theorem to obtain
$$| \check{\rho}(z-u(x_1)\gamma(t))- \check{\rho}(z)|\lesssim \sum_{m\in \mathbb{Z}}\frac{1}{(|m-1|+1)^2}\chi_{[m,m+1]}(z)|u(x_1)\gamma(t)|. $$
Due to the fact that $\sum_{m\in \mathbb{Z}}\frac{1}{(|m-1|+1)^2}\lesssim 1$, it suffices to bound the operator defined by setting, for any fixed $m\in \mathbb {Z}$,
\begin{align}\label{eq:3.32}
K_mf(x_1,x_2):=\int_m^{m+1}\int_{|t|\leq   2^{n(x_1)}}|f(x_1-t,x_2-z)|\frac{|u(x_1)\gamma(t)|}{|t|} \phi (t)\,\textrm{d}t\textrm{d}z
\end{align}
with a  bound independent of $m$ and $u$. By Minkowski's inequality, \eqref{eq:3.27} and noticing that $\frac{\gamma(t)}{t}$ is increasing on $(0,\infty)$, we have
\begin{align}\label{eq:3.33}
\|K_mf(\cdot_1,\cdot_2)\|^p_{L^{p}(\mathbb{R}^{2})}
&\leq  \int_{-\infty}^{\infty} \left\{ \int_m^{m+1} \int_{|t|\leq   2^{n(x_1)}}  \|f(x_1-t,\cdot_2)\|_{L^{p}(\mathbb{R}^1_{x_2})}    \frac{|u(x_1)\gamma(t)|}{|t|} \phi (t) \,\textrm{d}t\,\textrm{d}z \right\}^p\,\textrm{d}x_1 \\
&\leq \int_{-\infty}^{\infty}\left\{\int_{|t|\leq   2^{n(x_1)}}  \|f(x_1-t,\cdot_2)\|_{L^{p}(\mathbb{R}^1_{x_2})}    \frac{|u(x_1)\gamma( 2^{n(x_1)})|}{| 2^{n(x_1)}|} \phi (t) \,\textrm{d}t \right\}^p\,\textrm{d}x_1 \nonumber\\
&\leq \int_{-\infty}^{\infty}\left\{ \frac{1}{2^{n(x_1)}}\int_{|t|\leq   2^{n(x_1)}}  \|f(x_1-t,\cdot_2)\|_{L^{p}(\mathbb{R}^1_{x_2})}  \, \textrm{d}t \right\}^p\,\textrm{d}x_1 \nonumber\\
&\lesssim  \int_{-\infty}^{\infty}\left[ M (\|f(\cdot,\cdot_2)\|_{L^{p}(\mathbb{R}^1_{x_2})})(x_1) \right]^p\,\textrm{d}x_1 \lesssim  \|f\|^p_{L^{p}(\mathbb{R}^{2})}, \nonumber
\end{align}
where $p\in(1,\infty)$. From \eqref{eq:3.30} and \eqref{eq:3.33}, it follows that
$$\|H^{(1)}_{u,\gamma}\mathbb{P}_0f\|_{L^{p}(\mathbb{R}^2)}\lesssim\|f\|_{L^{p}(\mathbb{R}^2)}.$$
Therefore, by \eqref{eq:717}, it implies
$$\|H^{(1)}_{u,\gamma}P_0f\|_{L^{p}(\mathbb{R}^2)}\lesssim \|P_0f\|_{L^{p}(\mathbb{R}^2)}.$$

For $H^{(2)}_{u,\gamma}P_0f$, let $f:=P_0f$, we can write
\begin{eqnarray*}
H^{(2)}_{u,\gamma}f(x_1,x_2)
     = \sum_{k\geq 0} \int_{-\infty}^{\infty} f(x_1-t,x_2-u(x_1)\gamma(t))\psi_{n(x_1)+k}(t)\,\frac{\textrm{d}t}{t}.
\end{eqnarray*}
By Minkowski's inequality, similarly to \eqref{eq:3.33}, for any given $p\in (1,\infty)$, we have
\begin{align}\label{eq:3.34}
\left\| \int_{-\infty}^{\infty} f(\cdot_1-t,\cdot_2-u(\cdot_1)\gamma(t))\psi_{n(\cdot_1)+k}(t)\,\frac{\textrm{d}t}{t}\right\|_{L^{p}(\mathbb{R}^{2})}
\lesssim \|f\|_{L^{p}(\mathbb{R}^{2})}.
\end{align}
From \eqref{eq:3.3} we already have
$$\left\| \int_{-\infty}^{\infty} e^{iu(\cdot)\gamma (t)}  f(\cdot-t)   \psi_{k+n(\cdot)}(t) \, \frac{\textrm{d}t}{t} \right\|_{L^{2}(\mathbb{R})}\lesssim 2^{-\omega_0 k}\|f\|_{L^{2}(\mathbb{R})}.$$
Therefore,
\begin{align}\label{eq:y}
\left\| \int_{-\infty}^{\infty} f(\cdot_1-t,\cdot_2-u(\cdot_1)\gamma(t))\psi_{n(\cdot_1)+k}(t)\,\frac{\textrm{d}t}{t}\right\|_{L^{2}(\mathbb{R}^{2})}\lesssim 2^{-\omega_0 k}\|f\|_{L^{2}(\mathbb{R}^2)}.
\end{align}
By interpolation between \eqref{eq:3.34} and \eqref{eq:y}, sum over $k\geq 0$, which leads to
$$\|H^{(2)}_{u,\gamma}f\|_{L^{p}(\mathbb{R}^2)}\lesssim \|f\|_{L^{p}(\mathbb{R}^2)}$$
for any given $p\in (1,\infty)$. This finishes the proof of Theorem \ref{theorem 1.3}.
\end{proof}

\subsection{$L^p(\mathbb{R}^{2})$ estimate for the Hilbert transform $H_{u,\gamma}$}\label{section 4}

Now we turn to the $L^p(\mathbb{R}^{2})$ estimate for the Hilbert transform $H_{u,\gamma}$ defined in \eqref{Hilbert transform} for any given $p\in (1,\infty)$. Our proof rely crucially on the commutation relation between $H_{u,\gamma}$ and $P_l$, then we can reduce our attention to a square function. As before, we also decompose our operator into a low frequency part and a high frequency part, the low frequency part is controlled by the Hardy-Littlewood maximal operator and the maximal truncated Hilbert transform. For the high frequency part, which also is repressed as a series of operators. Building on the already proofed $L^2(\mathbb{R}^{2})$ estimate with bound $2^{-\omega_0 k}$ and the strategy of interpolation, it suffices to obtained a $L^p(\mathbb{R}^{2})$ estimate with bound $k^2$. This unusual $L^p(\mathbb{R}^{2})$ bound can be achieved by the shifted maximal operator, which form a pointwise estimate for taking average along variable plane curve $u(x_1)\gamma$.

\begin{proof}[Proof of Theorem \ref{theorem 1.1}]

We note that the commutation relation
$$H_{u,\gamma}P_l=P_l H_{u,\gamma}$$
holds for any $l\in \mathbb{Z}$. By Littlewood-Paley theory it is enough to show that
\begin{align}\label{eq:3.36}
\left\|\left[\sum_{l\in \mathbb{Z}}\left|H_{u,\gamma}P_lf\right|^2\right]^{\frac{1}{2}}\right\|_{L^{p}(\mathbb{R}^{2})}\lesssim \|f\|_{L^{p}(\mathbb{R}^{2})}.
\end{align}
As \eqref{eq:3.27}, for any $l\in \mathbb{Z}$, define $n_l:\ \mathbb{R}\rightarrow\mathbb{Z}$ such that for any $x_1\in \mathbb{R}$
\begin{align}\label{eq:y3}
\frac{1}{\gamma (2^{n_l(x_1)+1})}\leq 2^l|u(x_1)|\leq \frac{1}{\gamma (2^{n_l(x_1)})} .
\end{align}
Similarly to \eqref{eq:yly2}, we decompose $H_{u,\gamma}P_l$ as
\begin{align}\label{eq:3.37}
H_{u,\gamma}P_l f(x_1,x_2)
=& \sum_{k\leq n_l(x_1)-1}  \int_{-\infty}^{\infty} P_lf(x_1-t,x_2-u(x_1)\gamma(t)) \psi_{k}(t)\,\frac{\textrm{d}t}{t}\\
& + \sum_{k\geq 0}  \int_{-\infty}^{\infty} P_lf(x_1-t,x_2-u(x_1)\gamma(t)) \psi_{k+n_l(x_1)}(t)\,\frac{\textrm{d}t}{t} \nonumber\\
=:& H^{(I)}_{u,\gamma}P_lf(x_1,x_2)+\sum_{k\geq 0}H_{u,\gamma,k+n_l(x_1)}P_lf(x_1,x_2).  \nonumber
\end{align}
Using the triangle inequality,  the left term of \eqref{eq:3.36} can be controlled by
\begin{equation}\label{eq:3.38}
\left\|\left[\sum_{l\in \mathbb{Z}}\left|H^{(I)}_{u,\gamma}P_lf\right|^2\right]^{\frac{1}{2}}\right\|_{L^{p}(\mathbb{R}^{2})}+ \sum_{k\geq 0}\left\|\left[\sum_{l\in \mathbb{Z}}\left|H_{u,\gamma,k+n_l(\cdot_1)}P_lf\right|^2\right]^{\frac{1}{2}}\right\|_{L^{p}(\mathbb{R}^{2})}.
\end{equation}

As before, for the low frequency part in \eqref{eq:3.38}, let $\sum_{k\leq n_l(x_1)-1} \psi_k(t)=:\phi (t)$,
$$ H^{(I)}_{u,\gamma}P_lf(x_1,x_2)= \mathrm{p.\,v.}\int_{|t|\leq   2^{n_l(x_1)}} P_lf(x_1-t,x_2-u(x_1)\gamma(t)) \phi(t)\,\frac{\textrm{d}t}{t} .$$
Let
$$\tilde{H}f(x_1,x_2):= \mathrm{p.\,v.}\int_{|t|\leq   2^{n_l(x_1)}}f(x_1-t,x_2)\phi (t)\,\frac{\textrm{d}t}{t}.$$
As \eqref{eq:bnu2}, we may obtain
\begin{align}\label{eq:3.39}
\tilde{H}P_lf(x_1,x_2) \lesssim M_1P_lf(x_1,x_2)+\mathcal{\tilde{H}}^*_1P_lf(x_1,x_2),
\end{align}
The vector-valued estimate for $M_1$ follows from the corresponding estimate for the one dimensional Hardy-Littlewood maximal function. Similarly, the vector-valued estimate for $\mathcal{\tilde{H}}^*_1$ follows from Cotlar's inequality and the vector-valued estimate for the Hilbert transform and the maximal function. Then from \eqref{eq:3.39} and the Littlewood-Paley theory one may obtain
\begin{align}\label{eq:3.40}
\left\|\left[\sum_{l\in \mathbb{Z}}\left|\tilde{H}P_lf\right|^2\right]^{\frac{1}{2}}\right\|_{L^{p}(\mathbb{R}^{2})}\lesssim \left\|\left[\sum_{l\in \mathbb{Z}}\left|P_lf\right|^2\right]^{\frac{1}{2}}\right\|_{L^{p}(\mathbb{R}^{2})}\lesssim\|f\|_{L^{p}(\mathbb{R}^{2})}.
\end{align}

Now we turn to the difference between $H^{(I)}_{u,\gamma}P_lf$ and $\tilde{H}P_lf$. Recall that $\rho$ is a non-negative smooth function supported on $\left\{s\in \mathbb{R}:\,\frac{1}{4}\leq |\xi|\leq 4\right\}$ and equals to $1$ on $\left\{s\in \mathbb{R}:\,\frac{1}{2}\leq |s|\leq 2\right\}$, let $\rho_l(s):=\rho (2^{-l}s)$, $l\in \mathbb{Z}$, and $\mathbb{P}_lf(x_1,x_2):=\int_{-\infty}^{\infty} f(x_1,x_2-s)\check{\rho_l}(s)\,\textrm{d}s$.
By Fourier transform, it is easy to see that
\begin{align}\label{eq:y0}\mathbb{P}_lP_lf=P_lf.\end{align}
The difference between $H^{(I)}_{u,\gamma}\mathbb{P}_lf$ and $\tilde{H}\mathbb{P}_lf$ can be written as
\begin{align}\label{eq:3.41}
\mathrm{p.\,v.}\int_{|t|\leq   2^{n_l(x_1)}} \int_{-\infty}^{\infty} f(x_1-t,x_2-s) \left[ \check{\rho}_l(s-u(x_1)\gamma(t))- \check{\rho}_l(s) \right]\,\textrm{d}s\phi (t)\,\frac{\textrm{d}t}{t}.
\end{align}
By the mean value theorem we have
\begin{align}\label{eq:3.42}
| \check{\rho_l}(s-w)- \check{\rho_l}(s) |\lesssim |w|2^{2l}2^{-2j}
\end{align}
if $|w|\leq 2^{-l}$ and $s$ is in the annulus $2^{-l+j-1}\leq |s|\leq 2^{-l+j}$ for $j\in \mathbb{N}$. For $j=0$ the estimate holds for all $|s|\leq 2^{-l}$. Since $\gamma$ is increasing on $(0,\infty)$ and $\gamma$ is either odd or even, from \eqref{eq:y3}, it implies that $2^l|u(x_1)\gamma(t)|\leq 2^l|u(x_1)|\gamma(2^{n_l(x_1)})\leq 1$ for all $|t|\leq   2^{n_l(x_1)}$. Thus the absolute value of \eqref{eq:3.41} can be estimated by a positive constant times
\begin{align}\label{eq:y4}
\sum_{j\in \mathbb{N}} \int_{|t|\leq   2^{n_l(x_1)}}\int_{|s|\leq 2^{-l+j}} |f(x_1-t,x_2-s)|  2^{2l}2^{-2j}|u(x_1)|\left|\frac{\gamma(t)}{t}\right| \,\textrm{d}s\,\textrm{d}t.
\end{align}
Noticing that $\frac{\gamma(t)}{t}$ is increasing on $(0,\infty)$, and $\gamma$ is either odd or even and \eqref{eq:y3}, we can bound \eqref{eq:y4}  by
\begin{align}\label{eq:y5}
&\sum_{j\in \mathbb{N}} \int_{|t|\leq   2^{n_l(x_1)}}\int_{|s|\leq 2^{-l+j}} |f(x_1-t,x_2-s)|  2^{2l}2^{-2j}|u(x_1)|\left|\frac{\gamma( 2^{n_l(x_1)})}{ 2^{n_l(x_1)}}\right| \,\textrm{d}s\,\textrm{d}t\\
&\quad \lesssim\sum_{j\in \mathbb{N}}2^{-j} \frac{1}{2^{n_l(x_1)}} \int_{|t|\leq   2^{n_l(x_1)}} \frac{1}{2^{-l+j}} \int_{|s|\leq 2^{-l+j}} |f(x_1-t,x_2-s)|  \,\textrm{d}s\,\textrm{d}t \lesssim M_1M_2f(x_1,x_2). \nonumber
\end{align}
Therefore, from the vector-valued estimate for $M_1,M_2$, the Littlewood-Paley theory and \eqref{eq:y0}, and the triangle inequality yield
\begin{align}\label{eq:3.43}
\left\|\left[\sum_{l\in \mathbb{Z}}\left|H^{(I)}_{u,\gamma}P_lf-\tilde{H}P_lf\right|^2\right]^{\frac{1}{2}}\right\|_{L^{p}(\mathbb{R}^{2})}
&\quad =\left\|\left[\sum_{l\in \mathbb{Z}}\left|H^{(I)}_{u,\gamma}\mathbb{P}_lP_lf-\tilde{H}\mathbb{P}_lP_lf\right|^2\right]^{\frac{1}{2}}\right\|_{L^{p}(\mathbb{R}^{2})}\\
&\quad \lesssim\left\|\left[\sum_{l\in \mathbb{Z}}\left|M_1M_2P_lf\right|^2\right]^{\frac{1}{2}}\right\|_{L^{p}(\mathbb{R}^{2})}
\lesssim  \left\|f\right\|_{L^{p}(\mathbb{R}^{2})}\nonumber.
\end{align}
From \eqref{eq:3.40} and \eqref{eq:3.43}, it follows that
\begin{align}\label{eq:3.44}
\left\|\left[\sum_{l\in \mathbb{Z}}\left|H^{(I)}_{u,\gamma}P_lf\right|^2\right]^{\frac{1}{2}}\right\|_{L^{p}(\mathbb{R}^{2})}\lesssim \left\|f\right\|_{L^{p}(\mathbb{R}^{2})}.
\end{align}

For the high frequency part in \eqref{eq:3.38}, It is enough to show that there exists a convergent series $\left\{C_k\right\}_{k=0}^{+\infty}$ such that for any $k\geq 0$,
\begin{align}\label{eq:3.45}
\left\|\left[\sum_{l\in \mathbb{Z}}\left|H_{u,\gamma,k+n_l(\cdot_1)}P_lf\right|^2\right]^{\frac{1}{2}}\right\|_{L^{p}(\mathbb{R}^{2})}\lesssim  C_k\left\|f\right\|_{L^{p}(\mathbb{R}^{2})}.
\end{align}

First for $p=2$, noticing that the bound in \eqref{eq:3.3} is independent of $u$, therefore, we can replace $u$ by $2^lu$ in \eqref{eq:3.3}. By Littlewood-Paley theory, as \eqref{eq:3.3} we have
\begin{align}\label{eq:3.46}
\left\|\left[\sum_{l\in \mathbb{Z}}\left|H_{u,\gamma,k+n_l(\cdot_1)}P_lf\right|^2\right]^{\frac{1}{2}}\right\|_{L^{2}(\mathbb{R}^{2})}\lesssim 2^{-\omega_0 k} \left\|f\right\|_{L^{2}(\mathbb{R}^{2})}
\end{align}
for some positive constant $\omega_0$. It suffices to claim that
\begin{align}\label{eq:3.47}
\left\|\left[\sum_{l\in \mathbb{Z}}\left|H_{u,\gamma,k+n_l(\cdot_1)}P_lf\right|^2\right]^{\frac{1}{2}}\right\|_{L^{p}(\mathbb{R}^{2})}\lesssim k^2\left\|f\right\|_{L^{p}(\mathbb{R}^{2})}
\end{align}
for all $p\in(1,\infty)$, since \eqref{eq:3.45} follows from interpolation between \eqref{eq:3.46} and \eqref{eq:3.47}.

We now consider $H_{u,\gamma,k+n_l(\cdot_1)}\mathbb{P}_lf$, where $\mathbb{P}_l$ is the same as \eqref{eq:y0}.
\begin{align}\label{eq:3.48}
&H_{u,\gamma,k+n_l(x_1)}\mathbb{P}_lf(x_1,x_2)\\
&\quad=  \int_{-\infty}^{\infty} \int_{-\infty}^{\infty} f(x_1-t,x_2-u(x_1)\gamma(t)-s) \frac{\psi_{k+n_l(x_1)}(t)}{t}\check{\rho_l}(s)\,\textrm{d}t\,\textrm{d}s\nonumber\\
 &\quad\leq \int_{\frac{1}{2}2^{k+n_l(x_1)}\leq|t|\leq 2\cdot 2^{k+n_l(x_1)}} \int_{-\infty}^{\infty}| f(x_1-t,x_2-u(x_1)\gamma(t)-s)|\left|\frac{\psi_{k+n_l(x_1)}(t)}{t} \right| |\check{\rho_l}(s)|\,\textrm{d}s\,\textrm{d}t\nonumber\\
 &\quad\lesssim  \frac{1}{2^{k+n_l(x_1)}}\int_{\frac{1}{2}2^{k+n_l(x_1)}\leq|t|\leq 2\cdot 2^{k+n_l(x_1)}} \int_{-\infty}^{\infty}| f(x_1-t,x_2-u(x_1)\gamma(t)-2^{-l}s)| |\check{\rho}(s)|\,\textrm{d}s\,\textrm{d}t\nonumber\\
&\quad\lesssim  \sum_{\tau\in \mathbb{Z}} \frac{1}{(1+|\tau|)^{4}}\frac{1}{2^{k+n_l(x_1)}}\nonumber\\
  &\quad\times\int_{\frac{1}{2}2^{k+n_l(x_1)}\leq|t|\leq 2\cdot 2^{k+n_l(x_1)}} \int_\tau^{\tau+1}| f(x_1-t,x_2-u(x_1)\gamma(t)-2^{-l}s)| \,\textrm{d}s\,\textrm{d}t\nonumber\\
 &\quad\approx \sum_{\tau\in \mathbb{Z}} \frac{1}{(1+|\tau|)^{4}}\frac{1}{2^{k+n_l(x_1)}}\nonumber\\
&\quad\times\int_{\frac{1}{2}2^{k+n_l(x_1)}\leq|t|\leq 2\cdot 2^{k+n_l(x_1)}} \int_0^{1}| f(x_1-t,x_2-u(x_1)\gamma(t)-2^{-l}(s+\tau))|\, \textrm{d}s\,\textrm{d}t.\nonumber
\end{align}
We want to control the last term in \eqref{eq:3.48} by
\begin{align}\label{eq:y1}
\sum_{\tau\in \mathbb{Z}} \frac{1}{(1+|\tau|)^{4}}\frac{1}{N_k}\sum_{m=0}^{N_k-1}\frac{1}{|I_m|}\int_{I_m} M_2^{(\sigma_m^{(2)})}f(x_1-t,x_2) \,\textrm{d}t,
\end{align}
where $\left\{I_m\right\}_{m=0}^{N_k-1}$ and the shifted maximal operator $M_2^{(\sigma_m^{(2)})}$ will be given soon. By a scaling argument, it suffices to prove that
\begin{align}\label{eq:y2}
&\sum_{\tau\in \mathbb{Z}} \frac{1}{(1+|\tau|)^{4}}\frac{1}{2^{k+n_l(x_1)}}\int_{\frac{1}{2}2^{k+n_l(x_1)}\leq|t|\leq 2\cdot 2^{k+n_l(x_1)}} \int_0^{1}| f(x_1-t,x_2-2^lu(x_1)\gamma(t)-s-\tau)| \,\textrm{d}s\,\textrm{d}t\\
&\quad \lesssim  \sum_{\tau\in \mathbb{Z}} \frac{1}{(1+|\tau|)^{4}}\frac{1}{N_k}\sum_{m=0}^{N_k-1}\frac{1}{|I_m|}\int_{I_m} M_2^{(\sigma_m^{(2)})}f(x_1-t,x_2) \,\textrm{d}t.\nonumber
\end{align}
We cover the region $\frac{1}{2}2^{k+n_l(x_1)}\leq|t|\leq 2\cdot2^{k+n_l(x_1)}$ by intervals $\left\{I_m\right\}_{m=0}^{N_k-1}$ where
$$I_m:=\left\{t\in \mathbb{R}:\,\frac{1}{2}2^{k+n_l(x_1)}+\frac{m}{2^l|u(x_1)| \gamma'(2^{k+n_l(x_1)})}\leq |t|\leq \frac{1}{2}2^{k+n_l(x_1)}+\frac{m+1}{2^l|u(x_1)| \gamma'(2^{k+n_l(x_1)})} \right\}$$
and $N_k\in \mathbb{N}$ is such that
\begin{align}\label{eq:3.49}
 \frac{3}{2}2^{k+n_l(x_1)}\leq  \frac{N_k}{2^l|u(x_1)| \gamma'(2^{k+n_l(x_1)})}\leq 2 \cdot2^{k+n_l(x_1)}.
\end{align}
Therefore,
\begin{align}\label{eq:3.50}
|I_m|=\frac{1}{2^l|u(x_1)|\gamma'(2^{k+n_l(x_1)})}
\end{align}
and
\begin{align}\label{eq:3.51}
\frac{1}{2}\frac{1}{2^{k+n_l(x_1)}}\leq \frac{1}{N_k\cdot|I_m|}\leq \frac{2}{3}\frac{1}{2^{k+n_l(x_1)}}.
\end{align}
Thus the first term in \eqref{eq:y2} can be controlled by
\begin{align}\label{eq:3.52}
 \sum_{\tau\in \mathbb{Z}} \frac{1}{(1+|\tau|)^{4}}\frac{1}{N_k}\sum_{m=0}^{N_k-1}\frac{1}{|I_m|}\int_{I_m} \int_0^{1}| f(x_1-t,x_2-2^lu(x_1)\gamma(t)-s-\tau)| \,\textrm{d}s\,\textrm{d}t.
\end{align}
No loss the generality, we denote
$${\Re}_m:=\left\{(t,2^lu(x_1)\gamma(t)+s+\tau)\in \mathbb{R}^2:\,t\in I_m, s\in (0,1) \right\}\in I_m\times J_m,$$
where
$$J_m:=\left[Ja,Jb\right],$$
$$Ja:=2^l|u(x_1)|\gamma\left(\frac{1}{2}2^{k+n_l(x_1)}+\frac{m}{2^l|u(x_1)| \gamma'(2^{k+n_l(x_1)})}\right)+\tau,$$
$$Jb:=2^l|u(x_1)|\gamma\left(\frac{1}{2}2^{k+n_l(x_1)}+\frac{m+1}{2^l|u(x_1)| \gamma'(2^{k+n_l(x_1)})}\right)+1+\tau.$$

We can show that
\begin{align}\label{eq:3.53}
|J_m|\approx 1.
\end{align}
In fact, by mean value theorem, it implies
\begin{align}\label{eq:3.54}
|J_m|
= 1+\frac{1}{ \gamma'(2^{k+n_l(x_1)})} \gamma'\left(\frac{1}{2}2^{k+n_l(x_1)}+\frac{m+\theta}{2^l|u(x_1)| \gamma'(2^{k+n_l(x_1)})}\right),
\end{align}
where $\theta\in [0,1]$.
It is easy to see that
\begin{align}\label{eq:3.55}
|J_m|\geq 1.
\end{align}
From $\theta\in [0,1]$ and \eqref{eq:3.49}, it follows that
$$m+\theta \leq N_k-1+\theta\leq N_k \leq 2 \cdot2^{k+n_l(x_1)}2^l|u(x_1)| \gamma'(2^{k+n_l(x_1)}).$$
Since $\gamma'$ is increasing on $(0,\infty)$ and $\frac{\gamma'(2t)}{\gamma'(t)}\leq C_1$ for any $t\in (0,\infty)$, we get
\begin{align}\label{eq:3.56}
|J_m|
\leq 1+\frac{\gamma'(4\cdot2^{k+n_l(x_1)})}{ \gamma'(2^{k+n_l(x_1)})}
= 1+\frac{\gamma'(4\cdot2^{k+n_l(x_1)})}{ \gamma'(2\cdot2^{k+n_l(x_1)})}\frac{\gamma'(2\cdot2^{k+n_l(x_1)})}{ \gamma'(2^{k+n_l(x_1)})}
\leq 1+C^2_1.
\end{align}
From \eqref{eq:3.55} and \eqref{eq:3.56}, it follows that \eqref{eq:3.53}.

Continuing the calculation in \eqref{eq:3.52}, which can be bounded by
\begin{align}\label{eq:3.57}
 \sum_{\tau\in \mathbb{Z}} \frac{1}{(1+|\tau|)^{4}} \frac{1}{N_k}\sum_{m=0}^{N_k-1}\frac{1}{|I_m|}\int_{I_m} \frac{1}{|J_m|}\int_{J_m} | f(x_1-t,x_2-s)|\,\textrm{d}s\, \textrm{d}t.
\end{align}
Given a non-negative parameter $\sigma$, the \emph{shifted maximal operator} is defined as
$$M^{(\sigma)}f(z):=\sup_{z\in I \subset\mathbb{R} }\frac{1}{|I|} \int_{I^{(\sigma)}}|f(\zeta)|\,\textrm{d}\zeta.$$
Here $I^{(\sigma)}$ denotes a shift of the interval $I:=[a,b]$ given by
$$I^{(\sigma)}:=[a-\sigma\cdot |I|,b-\sigma\cdot |I|]\cup [a+\sigma\cdot |I|,b+\sigma\cdot |I|].$$
We observe that
\begin{align}\label{eq:3.58}
 \frac{1}{|J_m|}\int_{J_m} | f(x_1-t,x_2-s)| \textrm{d}s\leq M_2^{(\sigma_m^{(2)})}f(x_1-t,x_2),
\end{align}
where $M_2^{(\sigma_m^{(2)})}$ is a shifted maximal operator applied in the second variable and
$$\sigma_m^{(2)}:=\frac{2^l|u(x_1)|}{|J_m|}\gamma\left(\frac{1}{2}2^{k+n_l(x_1)}+\frac{m}{2^l|u(x_1)| \gamma'(2^{k+n_l(x_1)})}\right)+\frac{\tau}{|J_m|}.$$
Combining \eqref{eq:3.57} and \eqref{eq:3.58}, we have established \eqref{eq:y2}.

Altogether we have now show that
\begin{align}\label{eq:3.60}
 |H_{u,\gamma,k+n_l(x_1)}\mathbb{P}_lf(x_1,x_2)|
 \lesssim \sum_{\tau\in \mathbb{Z}} \frac{1}{(1+|\tau|)^{4}}\frac{1}{N_k}\sum_{m=0}^{N_k-1}\frac{1}{|I_m|}\int_{I_m} M_2^{(\sigma_m^{(2)})}f(x_1-t,x_2) \,\textrm{d}t
\end{align}
for any $l\in \mathbb{Z}$. Therefore,
from \eqref{eq:y0},
\begin{align}\label{eq:3.62}
 |H_{u,\gamma,k+n_l(x_1)}P_lf(x_1,x_2)|
\lesssim  \sum_{\tau\in \mathbb{Z}} \frac{1}{(1+|\tau|)^{4}}\frac{1}{N_k}\sum_{m=0}^{N_k-1}\frac{1}{|I_m|}\int_{I_m} M_2^{(\sigma_m^{(2)})}P_lf(x_1-t,x_2) \,\textrm{d}t
\end{align}
for any $l\in \mathbb{Z}$.

Since $\gamma(0)=0$, similarly to Remark \ref{remark 1.1}, by the Cauchy mean value theorem, we have $\frac{\gamma(2t)}{\gamma(t)}\leq 2C_1$ holds for any $t\in (0,\infty)$. Notice that $\gamma$ is increasing on $(0,\infty)$, combining $m\leq N_k-1\leq N_k$, \eqref{eq:y3}, \eqref{eq:3.49}, \eqref{eq:3.53}, one obtains
\begin{align}\label{eq:3.59}
\sigma_m^{(2)}
\leq& \frac{1}{|J_m|}\frac{1}{\gamma (2^{n_l(x_1)})}\gamma\left(\frac{1}{2}2^{k+n_l(x_1)}+\frac{m}{2^l|u(x_1)| \gamma'(2^{k+n_l(x_1)})}\right)+\frac{\tau}{|J_m|}\\
\lesssim&\frac{1}{\gamma (2^{n_l(x_1)})}\gamma\left(\frac{1}{2}2^{k+n_l(x_1)}+\frac{2 \cdot2^{k+n_l(x_1)}2^l|u(x_1)| \gamma'(2^{k+n_l(x_1)}) }{2^l|u(x_1)| \gamma'(2^{k+n_l(x_1)})}\right)+ \tau\nonumber\\
\lesssim&\frac{\gamma (2^{n_l(x_1)+k+2})}{\gamma (2^{n_l(x_1)})}+ \tau
\lesssim (2C_1)^{k+2}+ \tau.\nonumber
\end{align}
From (\cite{G2}, Theorem 3.1), \eqref{eq:3.59} and the Littlewood-Paley theory, we get the following vector-valued estimate for the one-dimensional shifted maximal operator
\begin{align}\label{eq:3.63}
\left\|\left[\sum_{l\in \mathbb{Z}}\left|M_2^{(\sigma_m^{(2)})}P_lf(\cdot_1-t,\cdot_2)\right|^2\right]^{\frac{1}{2}}\right\|_{L^{p}(\mathbb{R}^1_{x_2})}
& \lesssim   \left[\log(2+|\sigma_m^{(2)}|)\right]^2 \left\|\left[\sum_{l\in \mathbb{Z}}\left|P_lf(\cdot_1-t,\cdot_2)\right|^2\right]^{\frac{1}{2}}\right\|_{L^{p}(\mathbb{R}^1_{x_2})}\\
& \lesssim  \left[\log(2+(2C_1)^{k+2}+|\tau|)\right]^2 \left\|f(\cdot_1-t,\cdot_2)\right\|_{L^{p}(\mathbb{R}^1_{x_2})}\nonumber\\
& \lesssim  k^2 (1+|\tau|)^2 \left\|f(\cdot_1-t,\cdot_2)\right\|_{L^{p}(\mathbb{R}^1_{x_2})}.\nonumber
\end{align}
Combining \eqref{eq:3.62}, by the triangle inequality and Minkowski's inequality, the left hand side of \eqref{eq:3.47} is controlled by
$$\sum_{\tau\in \mathbb{Z}} \frac{1}{(1+|\tau|)^{4}}
  \left\|\frac{1}{N_k}\sum_{m=0}^{N_k-1} \frac{1}{|I_m|}\int_{I_m} \left\| \left[\sum_{l\in \mathbb{Z}}\left| M_2^{(\sigma_m^{(2)})}P_lf(\cdot_1-t,\cdot_2) \right|^2\right]^{\frac{1}{2}}\right\|_{L^{p}(\mathbb{R}^1_{x_2})}\,\textrm{d}t\right\|_{L^{p}(\mathbb{R}^1_{x_1})} $$
Inserting \eqref{eq:3.63}, the above expression is bounded by
$$  k^2 \sum_{\tau\in \mathbb{Z}} \frac{1}{(1+|\tau|)^{2}}\left\|\frac{1}{N_k}\sum_{m=0}^{N_k-1} \frac{1}{|I_m|}\int_{I_m}\left\|f(\cdot_1-t,\cdot_2)\right\|_{L^{p}(\mathbb{R}^1_{x_2})}\,\textrm{d}t\right\|_{L^{p}(\mathbb{R}^1_{x_1})} $$
Noticing \eqref{eq:3.49} and \eqref{eq:3.51}, we can bound the above term by
\begin{align}
&  k^2 \sum_{\tau\in \mathbb{Z}} \frac{1}{(1+|\tau|)^{2}}\left\|\frac{1}{2^{k+n_l(\cdot_1)}}\int_{\frac{1}{2}2^{k+n_l(\cdot_1)}\leq|t|\leq
   \frac{5}{2}2^{k+n_l(\cdot_1)}}\left\|f(\cdot_1-t,\cdot_2)\right\|_{L^{p}(\mathbb{R}^1_{x_2})}\,\textrm{d}t\right\|_{L^{p}(\mathbb{R}^1_{x_1})}  \nonumber\\
  &\quad \lesssim  k^2 \sum_{\tau\in \mathbb{Z}} \frac{1}{(1+|\tau|)^{2}}\left\|M_1\left(\left\|f(\cdot,\cdot_2)\right\|_{L^{p}(\mathbb{R}^1_{x_2})}\right)(\cdot_1)\right\|_{L^{p}(\mathbb{R}^1_{x_1})}  \nonumber\\
 &\quad \lesssim  k^2 \sum_{\tau\in \mathbb{Z}} \frac{1}{(1+|\tau|)^{2}}\left\|\left\|f(\cdot_1,\cdot_2)\right\|_{L^{p}(\mathbb{R}^1_{x_2})}\right\|_{L^{p}(\mathbb{R}^1_{x_1})}
  \lesssim  k^2  \left\|f\right\|_{L^{p}(\mathbb{R}^2)}.\nonumber
\end{align}
Therefore, we obtain \eqref{eq:3.47}, which completes the proof of Theorem \ref{theorem 1.1}.
\end{proof}

\section*{Acknowledgements}

The authors would like to thank Prof. Dachun Yang for many valuable comments and discussions.

\bigskip

\noindent  Haixia Yu and Junfeng Li (Corresponding author)

\smallskip

\noindent  Laboratory of Mathematics and Complex Systems
(Ministry of Education of China),
School of Mathematical Sciences, Beijing Normal University,
Beijing 100875, People's Republic of China

\smallskip

\noindent {\it E-mails}: \texttt{yuhaixia@mail.bnu.edu.cn} (H. Yu)

\noindent\phantom{{\it E-mails:}} \texttt{lijunfeng@bnu.edu.cn} (J. Li)

\bigskip

\end{document}